\newtheorem{thm}{Theorem}[section]
\newtheorem{prop}[thm]{Proposition}
\newtheorem{cor}[thm]{Corollary}
\newtheorem{lemma}[thm]{Lemma}
\newtheorem*{theorem*}{Theorem}
\theoremstyle{remark}
\newtheorem{defn}[thm]{Definition}
\numberwithin{equation}{section}
\newcommand{\N}{\mathbb{N}}
\newcommand{\R}{\mathbb{R}}
\newcommand{\C}{\mathbb{C}}
\begin{document}
\title[H. Larki and A. Riazi]{Stable rank of Leavitt path algebras of arbitrary graphs}

\author{Hossein Larki and Abdolhamid Riazi}

\address{Department of Pure Mathematics\\ Faculty of Mathematics
and Computer Science\\ Amirkabir University of Technology, No. 424, Hafez
Ave.,15914\\ Tehran, Iran}
\email{h.larki@aut.ac.ir (H.Larki), riazi@aut.ac.ir (A.Riazi)}


\date{\today}

\subjclass[2010]{16D70}

\keywords{Leavitt path algebra, stable rank, purely infinite quotient}

\begin{abstract}
The stable rank of Leavitt path algebras of row-finite graphs was computed by Ara and Pardo. In this paper we extend this for an arbitrary directed graph. In some parts, we proceed our computation as the row-finite case while in some parts we use the knowledge about row-finite setting by applying the desingularizing method due to Drinen and Tomforde. In particular, we characterize purely infinite simple quotients of a Leavitt path algebra.
\end{abstract}

\maketitle

\section{Introduction}

For a given filed $K$, Leavitt in \cite{lea} constructed a unital $K$-algebra $L(1,n)$ generated by $\{x_1,\ldots,x_n,y_1,\ldots,y_n\}$ satisfying the relations $\sum_{i=1}^ny_ix_i=1$ and $x_iy_i=\delta_{ij}1$ for $1\leq i,j\leq n$. In \cite{abr1,ara5,abr2} the authors generalized these algebras to a certain class of path algebras $L_K(E)$ which are called \emph{Leavitt path algebras}. Leavitt path algebras $L_K(E)$ are intimately related to graph $C^*$-algebras $C^*(E)$ (see \cite{rae1}), and when $K=\C$, it is a dense $*$-subalgebra of $C^*(E)$. Leavitt path algebras also give concrete examples of many interesting classes of algebras rather than the classical Leavitt algebras $L(1,n)$, such as: (1) matrix rings $M_n(K)$ for $n\in \N\cup\{\infty\}$, where $M_\infty(K)$ denotes $\N\times\N$ matrices with only finitely many nonzero entries, (2) the Toeplitz algebra, and (3) the Laurent polynomial ring $K[x,x^{-1}]$. One of the important properties of Leavitt path algebras is structural properties of them can be related to simple properties of their underlying graphs, such as the simplicity \cite{abr1,tom2}, purely infinite simple \cite{abr3}, finite and locally finite dimensionality \cite{abr5,abr6}, semisimplicity \cite{abr4}, and the exchange property \cite{ara3}.

If $E$ is a row-finite graph which satisfies Condition (K), in \cite{ara3} it was shown that possible values for the (Bass) stable rank of $L_K(E)$ are only 1, 2, or $\infty$, and concrete criteria in terms of properties of $E$ were given for each value. In \cite{ara1}, this result was shown for all row-finite graphs. The (topological) stable rank of the graph $C^*$-algebra $C^*(E)$ was computed in \cite{dei}. Note that, in spite of many similarities between the graph $C^*$-algebra and the Leavitt path algebra of a graph, there may be a graph $E$ such that the stable rank of $C^*(E)$ is 1 while the stable rank of $L_K(E)$ is 2 (see \cite[Remark 3.3(3)]{ara1}).

In this paper we compute the stable rank of Leavitt path algebras of arbitrary graphs. Since the stable rank is not Morita invariant, we could not use the desingularizing method of Drinen and Tomforde \cite{dri} for our generalization. The computation of stable rank for arbitrary graphs largely proceeds as it does for row-finite graphs. However, we need to extend some results about the Leavitt path algebras of row-finite graphs to arbitrary graphs; in particular, we characterize purely infinite simple Leavitt path algebras.

This article is organized as follows. In Section 2, we recall some preliminaries about Leavitt path algebras which we need in the next sections. In Section 3, purely infinite simple Leavitt path algebras are characterized by giving a criterion for underlying graphs. This result is the generalization of \cite[Theorem 11]{abr3} which will be obtained by applying desingularizing method. Then we determine purely infinite simple quotients of Leavitt path algebras. Finally, in Section 4, we compute the stable rank of Leavitt path algebras.

\section{The Leavitt path algebra of a graph}

In this section we provide the basic definitions and properties of Leavitt path algebras which will be used in the next sections.

\begin{defn} A (directed) graph $E=(E^0,E^1,r,s)$ consists of a countable set of vertices $E^0$, a countable set of edges $E^1$, a source function $s:E^1\rightarrow E^0$, and a range function $r:E^1\rightarrow E^0$. A vertex $v\in E^0$ is called a \emph{sink} if $s^{-1}(v)=\emptyset$, is called \emph{finite emitter} if $0<|s^{-1}(v)|<\infty$, and is called \emph{infinite emitter} if $|s^{-1}(v)|=\infty$. If $s^{-1}(v)$ is a finite set for every $v\in E^0$, then $E$ is called \emph{row-finite}.
\end{defn}
If $e_1,\ldots,e_n$ are edges such that $r(e_i)=s(e_{i+1})$ for $1\leq i< n$, then $\alpha=e_1\ldots e_n$ is called a \emph{path} of length $|\alpha|=n$ with source $s(\alpha)=s(e_1)$ and range $r(\alpha)=r(e_n)$. For $n\geq 2$, we define $E^n$ to be the set of paths of length $n$, and $E^*:=\bigcup_{i=0}^\infty E^n$ the set of all finite paths. Note that we consider the vertices in $E^0$ to be paths of length zero. For $v,w\in E^0$, we denote $v\geq w$ if there is a path from $v$ to $w$. Also, if $X\subseteq E^0$, $v\geq X$ means there is a path from $v$ into $X$.

A \emph{closed path based at $v$} is a path $\alpha\in E^*\setminus E^0$ such that $v=s(\alpha)=r(\alpha)$. If $s(\alpha)=r(\alpha)$ and $s(e_i)\neq s(e_j)$ for $i\neq j$, then $\alpha$ is called a \emph{simple closed path}. We say that a closed path $\alpha=e_1\ldots e_n$ has an \emph{exit} if there is a vertex $v=s(e_i)$ and an edge $f\in s^{-1}(v)\setminus \{e_i\}$.

\begin{defn}\label{defn1.3}
Let $(E^1)^*$ denote the set of formal symbols $\{e^*:e\in E^1\}$. We define $v^*=v$ for all $v\in E^0$, and for a path $\alpha=e_1\ldots e_n\in E^n$ we define $\alpha^*:=e_n^*\ldots e_1^*$. We call the elements of $E^1$ \emph{real edges} and the elements of $(E^1)^*$ \emph{ghost edges}.
\end{defn}

\begin{defn}\label{defn1.4}
Let $E$ be a graph and let $R$ be a ring. A Leavitt $E$-family is a set $\{v,e,e^*: v\in E^0,e\in E^1\}\subseteq  R$ such that $\{v:v\in E^0\}$ consists of pairwise orthogonal idempotents and the following conditions are satisfied:
\begin{enumerate}[\hspace{5mm}$(1)$]
\item $s(e)e=er(e)=e$ for all $e\in E^1$,
\item $r(e)e^*=e^*s(e)=e^*$ for all $e\in E^1$,
\item $e^*f=\delta_{e,f}r(e)$ for all $e,f\in E^1$, and
\item $v=\sum_{s(e)=v}ee^*$ whenever $0<|s^{-1}(v)|<\infty$.
\end{enumerate}
\end{defn}

\begin{defn}
Let $E$ be a graph and let $K$ be a field. The \emph{Leavitt path algebra of $E$ with coefficients in $K$}, denoted by $L_K(E)$, is the universal $K$-algebra generated by a Leavitt $E$-family.
\end{defn}

The universal property of $L_K(E)$ means that if $A$ is a $K$-algebra and $\{a_v,b_e,b_{e^*}:v\in E^0, e\in E^1\}$ is a Leavitt $E$-family in $A$, then there exists a $K$-algebra homomorphism $\phi:L_K(E)\rightarrow A$ such that $\phi(v)=a_v,~\phi(e)=b_e,$ and $\phi(e^*)=b_{e^*}$ for all $v\in E^0$ and $e\in E^1$.

\begin{defn}
A graph $E$ is said to satisfy Condition (L) if every simple closed path in it has an exit, and is said to satisfy Condition (K) if any vertex in $E^0$ is either the base of no closed paths or the base of at least two distinct closed paths.
\end{defn}

Recall that a \emph{set of local units} for a ring $R$ is a set $U\subseteq  R$ of commutating idempotents with the property that for any $x\in R$ there exists $u\in U$ such that $ux=xu=x$. If $E^0$ is finite, then $1=\sum_{v\in E^0}v$ is a unit for $L_K(E)$. If $E^0$ is infinite, then $L_K(E)$ does not have a unit, but if we list the vertices of $E$ as $E^0=\{v_1,v_2,\ldots\}$ and set $t_n:=\sum_{i=1}^nv_i$, then $\{t_n\}_{n\in\mathbb{N}}$ is a set of local units for $L_K(E)$.

\begin{defn}\label{defn2.6}
A subset $X\subseteq E^0$ is called \emph{hereditary} if for any edge $e\in E^1$ with $s(e)\in X$ we have that $r(e)\in X$. Also, we say that $X\subseteq E^0$ is \emph{saturated} if for each finite emitter $v\in E^0$ with $r(s^{-1}(v))\subseteq X$ we have $v\in X$. The hereditary and saturated closure of a subset $X\subseteq E^0$, denoted by $\overline{X}$, is the smallest hereditary and saturated subset of $E^0$ containing $X$. The set of hereditary and saturated subsets of $E^0$ is denoted by $\mathcal{H}_E$.
\end{defn}

As pointed in \cite[Remark 3.1]{bat1}, for a subset $X\subseteq E^0$, we have $\overline{X}=\bigcup_{n=0}^\infty \Lambda_n(X)$, where
\begin{enumerate}[\hspace{5mm}$(1)$]
\item $\Lambda_0(X):=X\cup\{v\in E^0: \mathrm{~there~is~a~path~from~a~vertex~in}~X~\mathrm{to}~v\}$,
\item $\Lambda_{n}(X):=\Lambda_{n-1}(X)\cup\{v\in E^0:0<|s^{-1}(v)|<\infty,~r(s^{-1}(v))\subseteq\Lambda_{n-1}(X)\}$, for $n\geq 1$.
\end{enumerate}
In particular, if $H$ is a hereditary subset of $E^0$, then every vertex in $\overline{H}\setminus H$ is a finite emitter.

Suppose that $H$ is a saturated hereditary subset of $E^0$. The set of vertices in $E^0$ which emit infinitely many edges into $H$ and finitely many into $E^0\setminus H$ is denoted by $B_H$; that is
$$B_H:=\left\{v\in E^0\setminus H:|s^{-1}(v)|=\infty~\mathrm{and}~0<|s^{-1}(v)\cap r^{-1}(E^0\setminus H)|<\infty\right\}.$$
Also, for any $v\in B_H$ we denote
\[v^H:=v-\sum_{\substack{s(e)=v \\
r(e)\notin H}}ee^*.\]

\begin{defn}
If $H$ is a saturated hereditary subset of $E^0$ and $B\subseteq B_H$, then $(H,B)$ is called an \emph{admissible pair} in $E$.
If $(H,B)$ is an admissible pair in $E$, we denote $I(H,B)$ the ideal of $L_K(E)$ generated by $\{v:v\in H\}\cup\{v^H:v\in B\}$. We usually denote $I(H)$ instead of $I(H,\emptyset)$.
\end{defn}

If $(H,B)$ is an admissible pair in $E$, it is shown in \cite[Theorem 5.7]{tom2} that $L_K(E)/I(H,B)$ is isomorphic to $L_K(E/(H,B))$, where $E/(H,B)$ is the graph $E\setminus H:=(E^0\setminus H, r^{-1}(E^0\setminus H),r,s)$ with some additional edges and vertices.

The \emph{desingularization} of a graph was introduced in \cite[\S2]{tom2}, and it was shown in \cite[Theorem 5.2]{abr2} and \cite[Lemma 6.7]{tom2} that forming the desingularization preserves the Morita equivalence class of the associated Leavitt path algebra. If $E$ is a graph, the desingularization of $E$ is a row-finite graph $F$ with no sinks. If we set $p:=\sum_{v\in E^0}v$ as an idempotent in the multiplication algebra $\mathcal{M}(L_K(F))$ (see \cite[\S3]{ara4}), then $L_K(E)$ is isomorphic to the corner $p L_K(F)p$. Also, the map $I\mapsto pIp$ is a lattice isomorphism from the lattice of ideals of $L_K(F)$ onto the lattice of ideals of $L_K(E)$ (cf. \cite[Lemma 6.7]{tom2}).

In this article, we need to approach $L_K(E)$ by Leavitt path algebras of finite graphs. For this, we use the following terminology from \cite{dei}. Suppose that $E$ is an arbitrary graph and $G\subseteq E^0\cup E^1$ is a finite set. We denote $G^0:=G\cap E^0$ and $G^1=G\cap E^1$ and assume that $r(G^1)\subseteq G^0$. We define the graph $E_G$ as the following:
\begin{align*}
E_G^0&:=G^1\cup \left\{v\in G^0: ~\mathrm{either}~s^{-1}(v)=\emptyset ~\mathrm{or}~ v\in s(E^1\setminus G^1)\right\}\hspace{1.6cm}\\
E_G^1&:=\{(e,f)\in G^1\times E_G^0: r(e)=s(f)\}
\end{align*}
with $s(e,f)=e$ and $r(e,f)=f$. Note that $E_G$ is a finite graph (i.e. both the vertex set and the edge set are finite) and each vertex $v\in E_G^0\setminus G^1$ is a sink in $E_G$.

The following Lemma is the Leavitt path algebra version of \cite[Lemma 1.1]{dei}.

\begin{lemma}\label{lem2.8}
Let $E$ be a graph and let $G\subseteq E^0\cup E^1$ be a finite set with $r(G^1)\subseteq G^0$. If we set
$$\left\{
  \begin{array}{ll}
    p_v:=v-\sum_{\substack{f\in G^1 \\
    s(f)=v}}ff^* & \mathrm{if}~v\in E_G^0\setminus G^1 \\
    p_e:=ee^* & \mathrm{if}~e\in G^1 \\
    s_{(e,v)}:=e\left(v-\sum_{\substack{f\in G^1 \\
    s(f)=v}}ff^*\right) & \mathrm{if}~e\in G^1~\mathrm{and}~r(e)=v \\
    s_{(e,f)}:=eff^* & \mathrm{if}~e,f\in G^1~\mathrm{and}~r(e)=s(f),
  \end{array}
\right.$$
then $X:=\{p_v,s_e,s_e^*: v\in E_G^0,e\in E_G^1\}$ is a Leavitt $E_G$-family in $L_K(E)$ and $L_K(E_G)$ is isomorphic to the $K$-subalgebra of $L_K(E)$ generated by $X$.
\end{lemma}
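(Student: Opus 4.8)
The plan is to verify the four Leavitt $E_G$-family relations for the set $X$ directly inside $L_K(E)$, and then use the universal property of $L_K(E_G)$ together with a grading/faithfulness argument to conclude that the induced homomorphism onto the subalgebra generated by $X$ is an isomorphism. First I would check that the $p_v$ ($v\in E_G^0$) are pairwise orthogonal idempotents. For $e\in G^1$, $p_e=ee^*$ is an idempotent since $e^*e=r(e)$ and $e^*ee^*=e^*$; two such, $ee^*$ and $e'e'^*$ with $e\neq e'$, are orthogonal because $e^*e'=\delta_{e,e'}r(e)=0$. For $v\in E_G^0\setminus G^1$, the element $p_v=v-\sum_{f\in G^1,\,s(f)=v}ff^*$ is idempotent because the $ff^*$ appearing are mutually orthogonal idempotents each dominated by $v$ (i.e. $v(ff^*)=(ff^*)v=ff^*$), so we are subtracting an idempotent from the idempotent $v$; and orthogonality of $p_v$ with $p_e=ee^*$ for $e\in G^1$ splits into two cases: if $s(e)\neq v$ then $ve=0$ kills everything, and if $s(e)=v$ then $e\in G^1$ with $s(e)=v$ so $ee^*$ is exactly one of the terms subtracted off, giving $p_v\cdot ee^*=ve\cdot e^*-\sum ff^* ee^*=ee^*-ee^*=0$ (using $f^*e=\delta_{f,e}r(e)$). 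Orthogonality of distinct $p_v,p_{v'}$ with $v,v'\in E_G^0\setminus G^1$ is immediate from orthogonality of $v,v'$ in $E^0$.

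Next I would verify relations (1) and (2) of Definition~\ref{defn1.4} for the edges. Relation (1) asks $s(s_{(e,x)})\,s_{(e,x)}=s_{(e,x)}\,r(s_{(e,x)})=s_{(e,x)}$, where $s(e,x)=p_e$ and $r(e,x)=p_x$. For the source side: $p_e\,s_{(e,x)}=ee^*\cdot e(\cdots)=e r(e)(\cdots)=e(\cdots)=s_{(e,x)}$, using $e^*e=r(e)$. For the range side, when $x=v\in E_G^0\setminus G^1$ we have $s_{(e,v)}=ev-\sum_{f\in G^1,\,s(f)=v}eff^*=e p_v$, so $s_{(e,v)}p_v=ep_v^2=ep_v=s_{(e,v)}$; when $x=f\in G^1$ we have $s_{(e,f)}=eff^*=e p_f$, so $s_{(e,f)}p_f=ep_f^2=ep_f=s_{(e,f)}$. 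Relation (2) for the $s_e^*$ is then obtained by applying the $*$-operation, which is an anti-automorphism of $L_K(E)$ fixing each $v$; this gives $r(s_e)s_e^*=s_e^*s(s_e)=s_e^*$ as needed.

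The heart of the computation is relations (3) and (4). For (3), given edges $(e,x),(e',x')\in E_G^1$ I must show $s_{(e,x)}^*\,s_{(e',x')}=\delta\,p_{r(e,x)}$. Writing $s_{(e,x)}=e p_x$ in all cases (with $p_x=v-\sum ff^*$ if $x=v\in E_G^0\setminus G^1$, and $p_x=ff^*$ if $x=f\in G^1$), we get $s_{(e,x)}^*s_{(e',x')}=p_x e^* e' p_{x'}=p_x(\delta_{e,e'}r(e))p_{x'}=\delta_{e,e'}\,p_x\,r(e)\,p_{x'}$. Now $r(e)=s(x)=r(e')=s(x')$ forces $x,x'$ to have the same source in $E$; I claim this together with $p_x r(e)=p_x$ and $r(e)p_{x'}=p_{x'}$ (which hold since $p_x$ is an idempotent ``based at'' $s(x)=r(e)$, and likewise for $p_{x'}$) reduces the product to $\delta_{e,e'}\,p_x p_{x'}=\delta_{e,e'}\delta_{x,x'}\,p_x$, using the already-established pairwise orthogonality of the $p$'s; and when nonzero this equals $p_{r(e,x)}$. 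For (4), I must check $p_e=\sum_{(e,x)\in E_G^1}s_{(e,x)}s_{(e,x)}^*$ for every $e\in G^1$ (note that by construction the sinks $v\in E_G^0\setminus G^1$ of $E_G$ emit nothing, so they impose no CK2 relation, which is exactly why $G$ need not be ``closed'' under $s$). Here $\sum_{(e,x)}s_{(e,x)}s_{(e,x)}^*=\sum_{x}e p_x e^*=e\bigl(\sum_x p_x\bigr)e^*$, and the key identity is $\sum_{x:\,s(x)=r(e)}p_x=r(e)$ as an element of $L_K(E)$: the sum runs over all $f\in G^1$ with $s(f)=r(e)$ contributing $ff^*$, plus possibly the single term $p_{r(e)}=r(e)-\sum_{f\in G^1,\,s(f)=r(e)}ff^*$ when $r(e)\in E_G^0\setminus G^1$ (which happens precisely when $r(e)$ is a sink in $E$ or emits some edge outside $G^1$); in either case the telescoping sum equals $r(e)$, possibly after observing that if $r(e)\notin E_G^0\setminus G^1$ then $r(e)$ is a finite emitter with all its edges in $G^1$ and the CK2 relation in $L_K(E)$ gives $\sum_{f\in G^1,\,s(f)=r(e)}ff^*=r(e)$. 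Hence $\sum_{(e,x)}s_{(e,x)}s_{(e,x)}^*=e\,r(e)\,e^*=ee^*=p_e$.

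With the Leavitt $E_G$-family $X$ in hand, the universal property of $L_K(E_G)$ yields a $K$-algebra homomorphism $\phi:L_K(E_G)\to L_K(E)$ with image the subalgebra generated by $X$; it remains to show $\phi$ is injective. I expect this to be the main obstacle only in the sense of needing the right tool: I would invoke the graded uniqueness theorem for Leavitt path algebras (the $\Z$-grading of $L_K(E_G)$ by path length). For this one checks that $\phi$ is graded when $L_K(E)$ is given its usual $\Z$-grading, since each generator $p_v$ has degree $0$ (it is a combination of $v$ and terms $ff^*$, all of degree $0$), each $s_{(e,x)}=ep_x$ has degree $1$, and each $s_{(e,x)}^*$ has degree $-1$; and one checks $\phi(p_v)\neq 0$ for every $v\in E_G^0$, which for $v\in G^1$ is clear since $ee^*\neq 0$ in $L_K(E)$, and for $v\in E_G^0\setminus G^1$ follows because such $v$ is either a sink in $E$ (so $p_v=v\neq 0$) or emits an edge outside $G^1$, say $g\in E^1\setminus G^1$ with $s(g)=v$, whence $p_v g=vg-\sum_{f\in G^1,s(f)=v}ff^*g=g-0=g\neq 0$ (using $f^*g=\delta_{f,g}r(f)=0$ since $g\notin G^1$), so $p_v\neq 0$. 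The graded uniqueness theorem then forces $\phi$ to be injective, completing the proof that $L_K(E_G)\cong\phi(L_K(E_G))$, the subalgebra of $L_K(E)$ generated by $X$.
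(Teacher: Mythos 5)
Your proposal is correct and follows the same route as the paper: the paper simply asserts that $X$ is a Leavitt $E_G$-family (which you verify in detail, including the key telescoping identity $\sum_{x:\,s(x)=r(e)}p_x=r(e)$ for the CK2 relation), then invokes the universal property and the Graded Uniqueness Theorem after checking $\phi(v)\neq 0$ for all $v\in E_G^0$, exactly as you do.
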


\begin{proof}
It is easy to verify the set $X$ is a Leavitt $E_G$-family in $L_K(E)$. So, the universal property implies that there exists a $K$-algebra homomorphism $\phi:L_K(E_G)\rightarrow L_K(E)$ with $\phi(v)=p_v$, $\phi(e)=s_e$, and $\phi(e^*)=s_e^*$ for all $v\in E_G^0$, $e\in E_G^1$. Since $\phi(v)\neq 0$ for all $v\in E_G^0$, the Graded Uniqueness Theorem implies that $\phi$ is injective. Therefore, the range of $\phi$ is isomorphic to $L_K(E_G)$ which is a $K$-subalgebra of $L_K(E)$.
\end{proof}

Note that if $G\subseteq E^0\cup E^1$ is a finite set and we consider $L_K(E_G)$ as a subalgebra of $L_K(E)$ by Lemma \ref{lem2.8}, then $v\in L_K(E_G)$ for every $v\in G^0$.

\begin{cor}\label{cor2.9}
Let $E$ be a graph. There exists an increasing sequence $(E_i)$ of finite graphs such that $L_K(E)=\underrightarrow{\lim}L_K(E_i)$.
\end{cor}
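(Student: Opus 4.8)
The plan is to build the finite subgraphs $E_G$ from Lemma \ref{lem2.8} by running over an exhaustion of $E^0\cup E^1$ by finite sets. First I would enumerate $E^0=\{v_1,v_2,\ldots\}$ and $E^1=\{f_1,f_2,\ldots\}$ (if either set is finite, the argument only simplifies). For each $n$, set $G_n':=\{v_1,\ldots,v_n\}\cup\{f_1,\ldots,f_n\}$ and then enlarge it to $G_n:=G_n'\cup\{r(f):f\in G_n'\cap E^1\}\cup\{s(f):f\in G_n'\cap E^1\}$, so that $G_n$ is finite and satisfies $r(G_n^1)\subseteq G_n^0$, as required by Lemma \ref{lem2.8}. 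Put $E_i:=E_{G_i}$. By construction $(G_n)$ is increasing with $\bigcup_n G_n=E^0\cup E^1$.

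Next I would check that $(E_i)$ is an increasing sequence in the appropriate sense: since $G_i\subseteq G_{i+1}$, the defining formulas for $E_{G}^0$ and $E_{G}^1$ show that each vertex $e\in G_i^1$ and each sink-type vertex of $E_{G_i}^0$ either survives as a vertex of $E_{G_{i+1}}$ or (in the sink case, when a new outgoing edge in $G_{i+1}^1$ appears at that vertex) gets "resolved" — but under the embeddings of Lemma \ref{lem2.8} the corresponding idempotents and partial isometries inside $L_K(E)$ are compatible. Concretely, the point is that the Leavitt $E_{G_i}$-family $X_i\subseteq L_K(E)$ produced by Lemma \ref{lem2.8} lies in the subalgebra generated by $X_{i+1}$: one verifies that $p_v^{(i)}$, $p_e^{(i)}$, $s^{(i)}_{(e,\cdot)}$ are finite sums and products of the elements $p^{(i+1)}, s^{(i+1)}$, using the Cuntz–Krieger relation (4) at the vertices that got resolved. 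This gives injective $K$-algebra homomorphisms $L_K(E_i)\hookrightarrow L_K(E_{i+1})$ that are compatible with the embeddings into $L_K(E)$, so we obtain a direct system $\bigl(L_K(E_i)\bigr)$ with limit a subalgebra $\underrightarrow{\lim} L_K(E_i)\subseteq L_K(E)$.

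Finally I would show the limit is all of $L_K(E)$. Every generator $v$, $e$, $e^*$ of $L_K(E)$ with $v\in E^0$, $e\in E^1$ appears once $v$, $e$ (hence $r(e)$, $s(e)$) land in some $G_i$: indeed for such $i$ the edge $e$ is a vertex of $E_i$, $r(e)\in E_i^0$, and the element $e = \sum_{(e,x)\in E_i^1} s_{(e,x)} + e\bigl(v-\sum_{s(f)=v,\,f\in G_i^1}ff^*\bigr)$ with $v=r(e)$ expresses $e$ inside the subalgebra generated by $X_i$ — and similarly $v=\sum_{s(f)=v,f\in G_i^1} ff^* + p_v$ (interpreting the sum as $0$ and $p_v=v$ when $v$ emits no edge of $G_i^1$) lies in that subalgebra, as does $e^*$. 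Hence the generators of $L_K(E)$ all lie in $\bigcup_i L_K(E_i)$, which forces $L_K(E)=\underrightarrow{\lim}L_K(E_i)$.

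The main obstacle I anticipate is purely bookkeeping rather than conceptual: carefully matching the two descriptions of a vertex $v\in E^0$ — namely as (possibly) a sink-type vertex $p_v$ of $E_{G_i}$ at stage $i$ versus a vertex that has been "opened up" at stage $i+1$ once a new outgoing edge is included — and verifying that the relation $v=p_v+\sum_{s(f)=v,f\in G_i^1} p_f$ makes the transition maps well-defined and compatible. Once that compatibility is nailed down, invoking Lemma \ref{lem2.8} for injectivity and the exhaustion property for surjectivity finishes the proof.
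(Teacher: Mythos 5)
Your proposal is correct and follows exactly the paper's approach: choose an increasing exhaustion $\{G_i\}$ of $E^0\cup E^1$ (arranged so that $r(G_i^1)\subseteq G_i^0$), set $E_i:=E_{G_i}$, invoke Lemma \ref{lem2.8} for the embeddings, and observe that the generators of $L_K(E)$ eventually appear — the paper's own proof is just this in three sentences, leaving to the reader the nesting and generator computations you spell out. One small slip: when $v=r(e)\in E_i^0$ the sum $\sum_{(e,x)\in E_i^1}s_{(e,x)}$ already contains the term $s_{(e,v)}=e\bigl(v-\sum_{f}ff^*\bigr)$, so your displayed formula for $e$ double-counts it; the correct identity is simply $e=\sum_{(e,x)\in E_i^1}s_{(e,x)}$.
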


\begin{proof}
Select an increasing family $\{G_i\}_{i\geq 1}$ of subsets of $E^0\cup E^1$ such that $\bigcup_{i=1}^\infty G_i=E^0\cup E^1$. If we denote $E_i:=E_{G_i}$, then Lemma \ref{lem2.8} implies that each $L_K(E_i)$ is a $K$-subalgebra of $L_K(E)$. Now the fact $\bigcup_i G_i=E^0\cup E^1$ concludes that $L_K(E)=\bigcup_i L_K(E_i)$.
\end{proof}

Note that $E_G$ preserves the property of having isolated closed paths (\cite[Lemma 1.3]{dei}). For example, if $E$ is acyclic, then so is $E_G$.


\section{Purely infinite simple Leavitt path algebras}

In this section, analogous to \cite[Theorem 11]{abr3} and \cite[Proposition 3.2]{bat2} we give algebraic conditions for a graph $E$ so that the associated Leavitt path algebra $L_K(E)$ is purely infinite simple. This result will be used in Section 4 to compute the stable rank of $L_K(E)$.

Recall that an idempotent $a$ in a ring $R$ is called \emph{infinite} if $aR$ is isomorphic as a right $R$-module to a proper direct summand of itself. $R$ is called \emph{purely infinite} if every nonzero right ideal of $R$ contains an infinite idempotent. By \cite[Proposition 10]{abr3}, if $R$ is a purely infinite simple ring with locall units, then for every nonzero idempotent $a\in R$ the corner $aRa$ is also purely infinite simple.

\begin{thm}\label{thm3.1}
Let $E$ be a graph. Then $L_K(E)$ is purely infinite simple if and only if

\begin{enumerate}[\hspace{5mm}$(1)$]
\item $\mathcal{H}_E=\{\emptyset,E^0\}$,
\item $E$ contains at least one closed path, and
\item $E$ satisfies Condition (L).
\end{enumerate}
\end{thm}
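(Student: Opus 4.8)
The plan is to reduce the arbitrary-graph statement to the known row-finite result, \cite[Theorem 11]{abr3}, via the desingularization of Drinen and Tomforde. Let $F$ be the desingularization of $E$, so that $F$ is a row-finite graph with no sinks and $L_K(E) \cong p L_K(F) p$ for the idempotent $p = \sum_{v\in E^0} v$ in the multiplier algebra of $L_K(F)$. Recall from the excerpt that $I \mapsto pIp$ is a lattice isomorphism between the ideal lattices of $L_K(F)$ and $L_K(E)$; in particular $L_K(E)$ is simple if and only if $L_K(F)$ is simple. For the ``pure infiniteness'' half I would invoke \cite[Proposition 10]{abr3}: if $L_K(F)$ is purely infinite simple (and has local units, which it does), then every corner $p L_K(F) p$ by a nonzero idempotent is again purely infinite simple; conversely a corner of a simple unital-or-local-units ring being purely infinite forces the ambient ring to be purely infinite simple as well (this is also part of the standard ideal/corner correspondence, which I would either cite or check directly from the definition of infinite idempotent). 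Thus $L_K(E)$ is purely infinite simple $\iff$ $L_K(F)$ is purely infinite simple.

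Next I would translate the graph conditions (1)–(3) for $E$ into the corresponding conditions for $F$ and then apply \cite[Theorem 11]{abr3}, which states that for a row-finite graph $F$, $L_K(F)$ is purely infinite simple iff $F$ has no nontrivial hereditary saturated subsets, $F$ satisfies Condition (L), and $F$ contains at least one cycle. The key point is a dictionary between the structure of $E$ and that of $F$:
\begin{enumerate}[\hspace{5mm}$(a)$]
\item $\mathcal{H}_E = \{\emptyset, E^0\}$ if and only if $\mathcal{H}_F = \{\emptyset, F^0\}$. This follows because desingularization induces a lattice isomorphism of the admissible-pair lattices (equivalently, of the ideal lattices, already quoted), and in the row-finite graph $F$ the admissible pairs are just the hereditary saturated subsets; the infinite emitters of $E$ that get ``split'' into tails contribute no new gauge-invariant ideals beyond those already indexed by $\mathcal{H}_E$ together with the $B_H$-data, and under the hypothesis $B_H$ is forced to be as in the simple case.
\item $E$ satisfies Condition (L) if and only if $F$ satisfies Condition (L). The added tail-edges in $F$ are never part of a closed path, and cycles in $E$ correspond bijectively to cycles in $F$ with the same exits; see \cite[Lemma 1.3]{dei} for the analogous statement that $E_G$ preserves isolated closed paths, and the proof for desingularization is the same in spirit.
\item $E$ contains a closed path if and only if $F$ contains a cycle, again because the tails added in desingularization are acyclic and do not create or destroy cycles through the original vertices.
\end{enumerate}
Once (a)–(c) are in place, the chain of equivalences $L_K(E)$ purely infinite simple $\iff$ $L_K(F)$ purely infinite simple $\iff$ (conditions for $F$) $\iff$ (conditions (1)–(3) for $E$) completes the proof.

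The main obstacle is item (a): carefully checking that the hereditary-saturated lattice of $E$ is trivial exactly when that of $F$ is trivial. The subtlety is that $F^0$ contains, besides (copies of) the vertices of $E^0$, the infinitely many new vertices lying on the tails attached to infinite emitters and sinks of $E$; one must verify that no proper nonempty hereditary saturated subset of $F^0$ can be built purely out of tail vertices, and that a hereditary saturated subset of $E^0$ extends canonically to one of $F^0$ (by saturating along the tails) and vice versa (by restricting). This is a purely combinatorial verification about the shape of the desingularized graph, and I expect it to be the only place where one genuinely has to open up the construction of $F$ rather than quote black-box results; everything else is either already recorded in the excerpt or is a routine consequence of the universal property and the definition of infinite idempotent. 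A cleaner alternative, if the bookkeeping becomes unwieldy, is to bypass $\mathcal{H}_F$ entirely: use the ideal-lattice isomorphism to get simplicity of $L_K(E)$ directly from the graph-theoretic simplicity criterion for arbitrary graphs (\cite{tom2}), which already says $L_K(E)$ is simple iff $\mathcal{H}_E = \{\emptyset,E^0\}$ and $E$ satisfies Condition (L); then pure infiniteness reduces to exhibiting, inside every nonzero idempotent corner, an infinite idempotent, which one produces from a vertex on a closed path with an exit exactly as in the row-finite proof of \cite[Theorem 11]{abr3}, using condition (1) to move any nonzero idempotent ``down'' to such a vertex.
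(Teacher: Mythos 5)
Your proposal is correct and matches the paper's strategy for the substantive direction: the paper likewise desingularizes, applies \cite[Theorem 11]{abr3} to the row-finite graph $F$, and transfers pure infiniteness through the corner $vL_K(F)v=vL_K(E)v$ via \cite[Proposition 10]{abr3}, and it realizes exactly your ``cleaner alternative'' by reading conditions (1) and (3) off the arbitrary-graph simplicity criterion \cite[Theorem 6.18]{tom2} instead of building a full $\mathcal{H}_E$--$\mathcal{H}_F$ dictionary. The only minor divergences are that for condition (2) in the forward direction the paper avoids $F$ entirely, using the direct-limit approximation of Corollary \ref{cor2.9} by finite acyclic graphs together with \cite[Lemma 8]{abr3}; that it passes to a genuine vertex corner $vL_K(F)v$ rather than the multiplier-algebra idempotent $p$ (which is the careful way to invoke the corner result); and that it explicitly checks every vertex of $F$ connects to a cycle -- the actual hypothesis of \cite[Theorem 11]{abr3} -- as a consequence of $\mathcal{H}_F=\{\emptyset,F^0\}$.
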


\begin{proof}
Suppose that $L_K(E)$ is purely infinite simple. We then have $\mathcal{H}_E=\{\emptyset,E^0\}$ and $E$ satisfies Condition (L) by \cite[Theorem 6.18]{tom2}. So, it remains to show the statement (2). If $E$ contains no closed paths, then by Corollary \ref{cor2.9} there is an increasing sequence $(E_i)$ of finite acyclic graphs such that $L_K(E)=\underrightarrow{\lim} L_K(E_i)$. Thus, \cite[Lemma 8]{abr3} implies that $L_K(E)$ is not purely infinite, a contradiction.

Conversely, assume that for graph $E$ statements (1), (2), and (3) hold. Then $L_K(E)$ is simple by \cite[Theorem 6.18]{tom2}. Take a vertex $v\in E^0$ and let $F$ be the desingularization of $E$. Recall that $L_K(E)$ and $L_K(F)$ are Morita equivalent (\cite[Lemma 6.7]{tom2}) and we have $L_K(E)=pL_K(F)p$, where $p:=\sum_{w\in E^0}w\in \mathcal{M}(L_K(F))$. Since the simplicity is Morita invariant, we see that $F$ satisfies conditions (1) and (3) by \cite[Theorem 6.18]{tom2}, while \cite[Lemm 2.6]{dri} yields condition (2) for $F$. Note that every vertex in $F$ connects to a closed path. Indeed, if $\alpha$ is a closed path in $F$ and $v\ngeq \alpha^0$, then the saturated hereditary set $\overline{\{v\}}$ does not intersect $\alpha^0$, which contradicts $\mathcal{H}_F=\{\emptyset,F^0\}$. Hence, since $F$ is row-finite, $L_K(F)$ is purely infinite simple by \cite[Theorem 11]{abr3}, and so does $vL_K(F)v$. Therefore, by the fact $vL_K(E)v=vpL_K(F)pv=vL_K(F)v$, \cite[Proposition 10(iv)]{abr3} implies that $L_K(E)$ is purely infinite simple.
\end{proof}

To prove Proposition \ref{prop3.3}, we need the following lemma.

\begin{lemma}\label{lem3.2}
Let $E$ be a graph. If $L_K(E)$ has an ideal $I$ such that $I\cap E^0=\emptyset$ and the quotient $L_K(E)/I$ is purely infinite simple, then $E$ satisfies Condition (K). In particular, we have $I=0$.
\end{lemma}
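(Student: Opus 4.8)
The plan is to deduce first that $I=0$, and then to obtain Condition~(K) by applying Theorem~\ref{thm3.1} to $L_K(E)\cong L_K(E)/I$. The obstruction to getting at $I$ directly is that $I$ need not be a graded ideal, so the Graded Uniqueness Theorem used in the proof of Lemma~\ref{lem2.8} does not apply to the quotient map $\pi\colon L_K(E)\to L_K(E)/I$; instead I would invoke the Cuntz--Krieger Uniqueness Theorem, which applies to $\pi$ once $E$ is known to satisfy Condition~(L). Establishing Condition~(L) for $E$ is therefore the first step, and I expect it to be the most delicate point.

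To prove that $E$ satisfies Condition~(L), suppose for contradiction that $E$ contains a simple closed path $c$ with no exit, and put $u:=s(c)=r(c)$. Since every vertex lying on $c$ emits only its one edge of $c$, the only paths in $E$ with source $u$ are initial segments of powers of $c$; applying relation~$(4)$ of Definition~\ref{defn1.4} repeatedly gives $cc^{*}=c^{*}c=u$, and one checks that the corner $uL_K(E)u$ is spanned by $\{c^{\,j}:j\in\mathbb{Z}\}$, hence is commutative (in fact $uL_K(E)u\cong K[x,x^{-1}]$). Because $I\cap E^0=\emptyset$ we have $u\notin I$, so $\bar u\neq 0$ in $L_K(E)/I$, and $\bar u\bigl(L_K(E)/I\bigr)\bar u$, being a homomorphic image of $uL_K(E)u$, is a nonzero commutative ring. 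On the other hand $L_K(E)/I$ has a set of local units (being a quotient of a ring that has one) and is purely infinite simple, so by \cite[Proposition~10]{abr3} its nonzero corner $\bar u\bigl(L_K(E)/I\bigr)\bar u$ is purely infinite simple as well; but a commutative simple ring (with identity) is a field, and a field contains no infinite idempotent, so it is not purely infinite --- a contradiction. Hence $E$ satisfies Condition~(L).

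Now, since $E$ satisfies Condition~(L) and $\pi(v)=v+I\neq 0$ for every $v\in E^0$, the Cuntz--Krieger Uniqueness Theorem forces $\pi$ to be injective, so $I=\ker\pi=0$; this gives the last assertion of the lemma. Consequently $L_K(E)\cong L_K(E)/I$ is purely infinite simple, and Theorem~\ref{thm3.1} yields $\mathcal{H}_E=\{\emptyset,E^0\}$ together with Condition~(L) for $E$ and the existence of a closed path. Since $B_\emptyset=B_{E^0}=\emptyset$, the only admissible pairs in $E$ are $(\emptyset,\emptyset)$ and $(E^0,\emptyset)$, so every quotient graph $E/(H,B)$ is either $E$ itself or the empty graph and hence satisfies Condition~(L); by the usual characterization of Condition~(K) in terms of quotient graphs satisfying Condition~(L), it follows that $E$ satisfies Condition~(K). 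The point requiring the most care is ensuring that the Cuntz--Krieger Uniqueness Theorem and the quotient-graph description of Condition~(K) are invoked in the generality of arbitrary graphs and arbitrary coefficient fields; granting those, the argument is routine.
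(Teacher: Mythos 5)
Your proof is correct, but it runs in the opposite logical direction from the paper's and uses different tools, so a comparison is worthwhile. The paper first extracts $\mathcal{H}_E=\{\emptyset,E^0\}$ from simplicity of the quotient, then disposes of a hypothetical exit-less closed path $\alpha$ by showing that $\alpha$ would be the unique closed path and $E$ would be forced to be row-finite, at which point the row-finite result \cite[Lemma 1.6]{ara1} gives the contradiction; Condition~(K) comes first, and $I=0$ is then deduced from the fact that under Condition~(K) every ideal is generated by an admissible pair, which together with $I\cap E^0=\emptyset$ and $B_\emptyset=\emptyset$ forces $I=I(\emptyset,\emptyset)=0$. You instead kill an exit-less simple closed path by a purely ring-theoretic observation --- the corner $uL_K(E)u\cong K[x,x^{-1}]$ is commutative, so its nonzero image $\bar u(L_K(E)/I)\bar u$ would be a commutative unital corner of a purely infinite simple ring with local units, contradicting \cite[Proposition 10]{abr3} since a commutative simple unital ring is a field and has no infinite idempotents. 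This yields Condition~(L), whence the Cuntz--Krieger Uniqueness Theorem (applicable because $I\cap E^0=\emptyset$ gives $\pi(v)\neq 0$ for all $v$) delivers $I=0$ at once, and Condition~(K) falls out at the end from Theorem~\ref{thm3.1}. Your route avoids the reduction to the row-finite case and the external citation of \cite[Lemma 1.6]{ara1}, at the price of invoking the Cuntz--Krieger Uniqueness Theorem for arbitrary graphs and, at the last step, the characterization of Condition~(K) via Condition~(L) for quotient graphs; you are right to flag that last citation as the point needing care, but in your situation it can be bypassed: with $\mathcal{H}_E=\{\emptyset,E^0\}$ and Condition~(L), a vertex $v$ based on exactly one simple closed path $\alpha$ would have an exit $f$ whose range does not connect back to $\alpha^0$, and then the saturated hereditary closure of $\{r(f)\}$ is nonempty and misses $\alpha^0$ (saturation never adjoins a vertex of $\alpha$ since each such vertex emits an edge back into $\alpha^0$), contradicting triviality of $\mathcal{H}_E$. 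With that remark included, your argument is complete and self-contained.
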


\begin{proof}
First note that $E$ has no nontrivial saturated hereditary subsets. Indeed, if $\emptyset,E^0\neq H\in \mathcal{H}_E$, then $I(H,B_H)+I$ is a nontrivial ideal in $L_K(E)/I$ which contradicts the simplicity of $L_K(E)/I$. Now, we show that $E$ satisfies Condition (K). To this end, it suffices to show that every closed path in $E$ has exits (because $E$ has no nontrivial saturated hereditary subsets). Suppose on the contrary that $\alpha$ is a closed path with no exits. Since $\mathcal{H}_E=\{\emptyset, E^0\}$, $\alpha$ is the only closed path in $E$ (up to permutation); otherwise, $\overline{\alpha^0}$ is a nontrivial saturated hereditary subset of $E^0$. Also, $E$ is row-finite. Indeed, if $v$ is an infinite emitter and $e$ is an edge with $s(e)=v$, then there exists a path from $r(e)$ to $v$ by the assumption $\mathcal{H}_E=\{\emptyset,E^0\}$. It turns out that $v$ is the base of infinity many closed paths which is impossible by the above argument. Thus \cite[Lemma 1.6]{ara1} implies that $\alpha$ has exits, a contradiction.

The second statement follows from the fact that $E$ satisfies Condition (K) if and only if every ideal in $L_K(E)$ is generated by an admissible pair in $E$ (\cite[Theorem 6.16]{tom2}).
\end{proof}

\begin{prop}\label{prop3.3}
Let $E$ be a graph. A quotient $L_K(E)/I$ is purely infinite simple if and only if there exists $H\in \mathcal{H}_E$ such that $I=I(H,B_H)$, and also
\begin{enumerate}[\hspace{5mm}$(1)$]
\item the quotient graph $E\setminus H$ contains no nontrivial saturated hereditary subsets,
\item $E\setminus H$ contains at least one closed path, and
\item $E\setminus H$ satisfies Condition (L).
\end{enumerate}
\end{prop}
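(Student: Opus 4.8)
The plan is to deduce Proposition \ref{prop3.3} from Theorem \ref{thm3.1} applied to the quotient graph, using Lemma \ref{lem3.2} to pin down the shape of the ideal $I$. I would argue both directions.

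For the forward direction, suppose $L_K(E)/I$ is purely infinite simple. First I would pass to the quotient modulo the (admissible) ideal generated by $H := \{v \in E^0 : v \in I\}$ together with $B_H$; more precisely, set $H = I \cap E^0$, which is routinely checked to be a saturated hereditary subset of $E^0$ (hereditary because $I$ is an ideal containing the idempotents $v$, saturated because of the Leavitt relation (4) of Definition \ref{defn1.4}). Then $I(H, B_H) \subseteq I$, and the quotient $L_K(E)/I$ is a further quotient of $L_K(E)/I(H,B_H) \cong L_K(E/(H,B_H))$. Writing $\bar I$ for the image of $I$ in $L_K(E/(H,B_H))$, one checks that $\bar I \cap (E/(H,B_H))^0 = \emptyset$: any vertex of the quotient graph lying in $\bar I$ would be a vertex of $E \setminus H$ sitting inside $I$, contradicting $H = I \cap E^0$ (here one has to be mildly careful about the extra vertices $v'$ that the quotient graph construction of \cite[Theorem 5.7]{tom2} adds for $v \in B_H \setminus B$, but with $B = B_H$ no such vertices appear). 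Now Lemma \ref{lem3.2}, applied to the graph $E/(H,B_H)$ and the ideal $\bar I$, forces $\bar I = 0$, i.e. $I = I(H,B_H)$. Finally, $L_K(E/(H,B_H)) \cong L_K(E)/I$ is purely infinite simple, and $E/(H,B_H)$ differs from $E \setminus H$ only by the sinks $v'$ coming from $B_H$, which affect neither $\mathcal{H}$, nor the presence of a closed path, nor Condition (L); so Theorem \ref{thm3.1} applied to $E/(H,B_H)$ gives conditions (1)--(3) for $E \setminus H$.

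For the converse, assume $H \in \mathcal{H}_E$ and that $E \setminus H$ satisfies (1)--(3). Again $E/(H,B_H)$ agrees with $E \setminus H$ up to adding some sinks, so $E/(H,B_H)$ also satisfies the three conditions of Theorem \ref{thm3.1}; hence $L_K(E/(H,B_H))$ is purely infinite simple, and by \cite[Theorem 5.7]{tom2} this algebra is isomorphic to $L_K(E)/I(H,B_H)$, which is therefore purely infinite simple as required.

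The main obstacle is bookkeeping around the quotient graph $E/(H,B_H)$ versus the naive deletion graph $E \setminus H$: one must verify carefully that (a) $I \cap E^0$ is saturated and hereditary so that $I(H,B_H)$ makes sense and sits inside $I$, (b) the extra ``primed'' sinks introduced in the construction of $E/(H,B_H)$ are harmless for all three conditions (they are genuine sinks, so they emit nothing and lie in no closed path, and $\mathcal{H}$ of the enlarged graph corresponds to $\mathcal{H}$ of $E \setminus H$ via adjoining these sinks — I would cite the relevant lattice correspondence from \cite{tom2}), and (c) the image of $I$ really does meet the quotient-graph vertex set trivially so that Lemma \ref{lem3.2} applies. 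Once these are in place the proposition is immediate from Theorem \ref{thm3.1} and Lemma \ref{lem3.2}.
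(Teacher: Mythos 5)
Your overall strategy is the same as the paper's: set $H=I\cap E^0$, pass to the quotient graph, apply Lemma \ref{lem3.2} to kill the residual ideal, and then read off (1)--(3) from Theorem \ref{thm3.1}; the converse direction is also handled identically. However, there is a genuine gap at the very first step of your forward direction, namely the assertion ``Then $I(H,B_H)\subseteq I$.'' You list this under item (a) of your ``bookkeeping'' as something to be verified, but it is not a routine verification, and it is false for a general ideal with $I\cap E^0=H$: the ideal $I(H,\emptyset)$ itself satisfies $I(H,\emptyset)\cap E^0=H$ while $v^H\notin I(H,\emptyset)$ for $v\in B_H$ whenever $B_H\neq\emptyset$ (graded ideals correspond to arbitrary admissible pairs $(H,B)$ with $B\subseteq B_H$, not just $B=B_H$). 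So the containment $I(H,B_H)\subseteq I$ can only come from the hypothesis that $L_K(E)/I$ is purely infinite simple, and at the point where you invoke it you have not yet used that hypothesis; as written the argument is circular, since $I=I(H,B_H)$ is part of what you are trying to prove. Moreover, without the containment you cannot form $L_K(E)/I$ as a quotient of $L_K(E/(H,B_H))$, which is what your application of Lemma \ref{lem3.2} rests on.

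The paper avoids this by introducing the intermediate set $B:=\{w\in B_H: w^H\in I\}$, for which $I(H,B)\subseteq I$ holds by construction. One then checks $\widetilde{I}\cap (E/(H,B))^0=\emptyset$ (here the primed sinks $w'$ for $w\in B_H\setminus B$ do matter: $w'$ corresponds to $w^H+I(H,B)$, and it is precisely the definition of $B$ that keeps these out of $\widetilde{I}$), applies Lemma \ref{lem3.2} to conclude $I=I(H,B)$, and only \emph{then} uses the simplicity of $L_K(E)/I\cong L_K(E/(H,B))$ to force $B=B_H$ (if $w\in B_H\setminus B$, the sink $w'$ generates a nonempty saturated hereditary subset of $(E/(H,B))^0$ that misses every closed path, hence is proper). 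Your proof needs this extra layer --- or some independent argument that $w^H\in I$ for every $w\in B_H$ --- before the rest of your (otherwise correct) reasoning goes through.
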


\begin{proof}
Suppose that $I$ is an ideal of $L_K(E)$ such that $L_K(E)/I$ is purely infinite simple. Let $H:=I\cap E^0$ and $B:=\{w\in B_H: w^H\in I\}$, and consider the quotient $L_K(E)/I(H,B)\cong L_K(E/(H,B))$. If $\widetilde{I}$ is the image of $I$ into $L_K(E/(H,B))$ under the quotient map, then we have
$$\frac{L_K(E/(H,B))}{\widetilde{I}}\cong \frac{L_K(E)/I(H,B)}{I/I(H,B)}\cong\frac{L_K(E)}{I},$$
so $L_K(E/(H,B))/\widetilde{I}$ is purely infinite simple. Since $\widetilde{I}\cap (E/(H,B))^0=\emptyset$, Lemma \ref{lem3.2} implies that $\widetilde{I}=0$ and $I=I(H,B)$. Note that the simplicity of $L_K(E)/I\cong L_K(E/(H,B))$ yields that $B=B_H$. Now conditions (1), (2), and (3) follow from Theorem \ref{thm3.1}.

The converse implication follows from Theorem \ref{thm3.1} by considering the quotient $L_K(E)/I(H,B_H)\cong L_K(E\setminus H)$.
\end{proof}

The following definition is from \cite[Definition 1.4]{dei}.

\begin{defn}\label{defn3.4}
Suppose that $E$ is a graph and $H$ is a saturated hereditary subset of $E^0$. Let\\
\leftline{\hspace{.5cm}$F_E(H)=\{\alpha=e_1\ldots e_n\in E^*:s(\alpha)\in E^0\setminus H,$}
\rightline{$r(\alpha)\in H,~\mathrm{and}~r(e_i)\notin H~ \mathrm{for~every}~i<n\}$\hspace{.5cm}}
and let $\overline{F}_E(H)=\{\overline{\alpha}:\alpha\in F_E(H)\}$ be a copy of $F_E(H)$. Then, we define the graph $_HE$ as follows:
\begin{align*}
(_HE)^0&:=H\cup F_E(H),\hspace{7cm}\\
(_HE)^1&:=s^{-1}(H)\cup \overline{F}_E(H),
\end{align*}
with $s(\overline{\alpha})=\alpha$ and $r(\overline{\alpha})=r(\alpha)$ for $\alpha\in F_E(H)$, and the source and the range as in $E$ for the other edges of $(_HE)^1$. Note that every vertex in $(_HE)^0\setminus H$ emits one edge into $H$.
\end{defn}

In \cite[Theorem 6.1]{rui}, it was shown that every graded ideal of a Leavitt path algebra $L_K(E)$ is also a Leavitt path algebra. (In fact, this result is a modified Leavitt path algebra version of \cite[Lemma 1.6]{dei}.) As an special case of \cite[Theorem 6.1]{rui}, if $H$ is a saturated hereditary subset of $E^0$, then $I(H,\emptyset)$ is isomorphic to $L_K(_HE)$.

We need the following lemma to prove Lemma \ref{lem4.5}. The idea of its proof is from the proof of \cite[Lemma 5.4]{ara3}.

\begin{lemma}\label{lem3.5}
Let $E$ be a graph, let
$$H_0:=\left\{v\in E^0:\exists e\neq f\in E^1~\mathrm{with}~ s(e)=s(f)=v~\mathrm{and}~r(e),r(f)\geq v\right\},$$
and let $H$ be the saturated hereditary closure of $H_0$.
\begin{enumerate}[\hspace{5mm}$(1)$]
\item If $I(H)$ is unital purely infinite simple, then $L_K(E)$ has a unital purely infinite simple quotient.
\item If $I(H)$ has a unital purely infinite simple quotient, then so does $L_K(E)$.
\end{enumerate}
\end{lemma}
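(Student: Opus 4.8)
The plan is to produce, in each case, a saturated hereditary subset $H'$ of $E^0$ (or of the relevant quotient graph) whose quotient Leavitt path algebra satisfies the three conditions of Theorem~\ref{thm3.1} and is moreover unital. Unitality of $L_K(E/(H',B'))$ is governed by finiteness of the vertex set of the quotient graph, so the real content is to locate a suitable \emph{finite}, maximal-with-respect-to-having-a-proper-nontrivial-image-situation saturated hereditary set; purely infinite simplicity will then be verified via Theorem~\ref{thm3.1} using the hypothesis on $I(H)$.

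For part (1), I would argue as follows. By the identification $I(H,\emptyset)\cong L_K(_HE)$ noted after Definition~\ref{defn3.4}, the hypothesis says $L_K(_HE)$ is unital purely infinite simple; unitality forces $(_HE)^0=H\cup F_E(H)$ to be finite, so in particular $H$ is finite. The definition of $H_0$ guarantees every closed path in $E$ based at a vertex of $H_0$ has an exit, and one checks $_HE$ satisfies Condition~(L), contains a closed path, and has $\mathcal{H}_{_HE}=\{\emptyset,(_HE)^0\}$. I now want to push $H$ up to a saturated hereditary subset $H'\supseteq H$ of $E^0$ such that $E\setminus H'$ is finite and the admissible pair $(H',B_{H'})$ gives a simple quotient. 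The natural candidate is to take $H'$ to be a saturated hereditary set maximal among those for which the corresponding ideal is proper; since the lattice of such sets, together with Condition~(L)/closed-path data, is exactly what Theorem~\ref{thm3.1} controls, the quotient $L_K(E)/I(H',B_{H'})\cong L_K(E\setminus H')$ will be simple, will inherit a closed path and Condition~(L) from the portion of $_HE$ that survives (the closed paths detected by $H_0$ are not killed, by heredity of the saturation), and will be unital once $E\setminus H'$ is finite. The point where care is needed — and where I would spend the most effort — is ensuring \emph{finiteness} of the quotient graph $E\setminus H'$ and handling the extra vertices $B_{H'}$; this is precisely why the hypothesis gives us a unital (hence finite-vertex) $I(H)$ to start from, and the argument should mirror how Ara--Goodearl--Pardo control such quotients in \cite{ara3}.

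For part (2), the hypothesis gives an ideal $J$ of $I(H)$ with $I(H)/J$ unital purely infinite simple. Since $I(H)\cong L_K(_HE)$ and graded ideals of Leavitt path algebras are again Leavitt path algebras (\cite[Theorem 6.1]{rui}), I would first reduce to the case that $J$ is graded: if not, one passes to the largest graded ideal contained in $J$ and uses that a purely infinite simple quotient of $L_K(_HE)$ is automatically graded-simple in the relevant sense, as in the proof of Proposition~\ref{prop3.3} via Lemma~\ref{lem3.2}. So $J$ corresponds to an admissible pair $(K,C)$ in $_HE$ with $K$ saturated hereditary, $L_K(_HE)/J\cong L_K(_HE\setminus(K,C))$ unital purely infinite simple, hence $(_HE)^0\setminus K$ finite. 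Now I transport $K$ back to $E$: the hereditary subsets of $(_HE)^0=H\cup F_E(H)$ correspond (after re-absorbing the path vertices $F_E(H)$, which by Definition~\ref{defn3.4} each emit a single edge into $H$) to hereditary subsets of $H$, and the saturated hereditary closure in $E^0$ of such a set is the desired $H'$. One then checks directly, using Theorem~\ref{thm3.1}, that $L_K(E)/I(H',B_{H'})\cong L_K(E\setminus H')$ is unital purely infinite simple: simplicity because $H'$ is maximal among saturated hereditary sets with proper ideal (this uses $\mathcal{H}$ considerations as in Lemma~\ref{lem3.2}), the closed path and Condition~(L) descending from $_HE\setminus(K,C)$, and finiteness of $E\setminus H'$ from finiteness of $(_HE)^0\setminus K$ together with the structure of $F_E(H)$.

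The main obstacle in both parts is the same: translating between saturated hereditary subsets of $_HE$ and of $E$ while keeping track of the breaking-vertex sets $B_H$ and $B_{H'}$, and guaranteeing that the quotient graph one lands on has a finite vertex set so that the resulting Leavitt path algebra is genuinely unital. Everything else — verifying the three conditions of Theorem~\ref{thm3.1} — is a routine check once the correct $H'$ has been pinned down, and the role of the special set $H_0$ is exactly to ensure that the closed paths providing condition (2) survive into the quotient.
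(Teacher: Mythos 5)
There is a genuine gap, and it is in the central construction rather than in the routine verifications. In part (1) you propose to enlarge $H$ to a saturated hereditary set $H'\supseteq H$ and pass to $L_K(E)/I(H',B_{H'})\cong L_K(E\setminus H')$. But the quotient graph $E\setminus H'$ has vertex set $E^0\setminus H'$, which is \emph{disjoint} from $H\supseteq H_0$; so nothing of $_HE$ ``survives'' there, and the closed paths supplied by $H_0$ are precisely the ones you delete. Worse, no choice of $H'\supseteq H_0$ can work: if $E\setminus H'$ contained a simple closed path $\alpha$ with an exit $e$ (Condition (L)) and had trivial $\mathcal{H}_{E\setminus H'}$, then $r(e)$ would have to connect back to $\alpha^0$, making $s(e)$ the base of two distinct closed paths and hence an element of $H_0\subseteq H'$ --- a contradiction. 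The paper's proof goes in the opposite direction: it sets $X:=\{v\in E^0: v\geq H\}$ and quotients by $Y:=E^0\setminus X$ (shown to be hereditary and saturated), so that the quotient graph \emph{is} $X$, which contains $H$ and its closed paths; finiteness of $X$ follows because $(_HE)^0=H\cup F_E(H)$ is finite and every vertex of $X\setminus H$ is the source of a path in $F_E(H)$. Your appeal to ``a saturated hereditary set maximal among those for which the corresponding ideal is proper'' is also unsupported: such a maximal set need not exist (an increasing chain of proper saturated hereditary sets can exhaust $E^0$), and maximality alone would not give finiteness of the quotient graph or the presence of a closed path.

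Part (2) inherits the same defect: after transporting the graded ideal of $I(H)\cong L_K(\,_HE)$ back to a saturated hereditary subset of $H$ and closing up in $E$, the resulting $H'$ is contained in (roughly) $H$, so $E\setminus H'$ still contains all of $E^0\setminus H$, which may be infinite; finiteness of $(_HE)^0\setminus K$ does not control this. The paper avoids the issue by writing the graded ideal as $I(X,B)$ for an admissible pair in $E$ itself and then applying part (1) to the quotient algebra $L_K(E)/I(X,B)$, where the ``cut down to the vertices that connect to $H$'' step of part (1) does the work. Your reduction to graded ideals via Proposition \ref{prop3.3} and Lemma \ref{lem3.2} is the right first move and matches the paper, but the subsequent transport-and-verify step does not close.
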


\begin{proof}
(1). Suppose that $I(H)$ is a unital purely infinite algebra. Then $L_K(_HE)$ is a unital purely infinite simple Leavitt algebra, and so $(_HE)^0$ is finite and the graph $_HE$ satisfies the conditions (1), (2), and (3) in Theorem \ref{thm3.1}. Let $X:=\{v\in E^0:v\geq  H\}$ and set $Y:=E^0\setminus X$. Note that $H\subseteq X$ and $X$ is a finite subset of $E^0$ because $(_HE)^0$ is finite. We claim that $Y$ is hereditary and saturated. Indeed, if $v\geq w$ and $w\in X$, then $v\geq H$ and $v\in X$, so $Y$ is hereditary. If $v\in E^0$ with $r(s^{-1}(v))\subseteq Y$, then $v$ does not connect to $H$ and hence $v\in E^0\setminus X=Y$. Thus $Y$ is also saturated.

We show that $L_K(E\setminus Y)\cong L_K(E)/I(Y,B_Y)$ is a unital purely infinite simple quotient. For this, we show that $(E\setminus Y)^0$ is finite and $E\setminus Y$ satisfies the conditions in Theorem \ref{thm3.1}. First, since $E\setminus Y=(X,r^{-1}(X),r,s)$ and $X$ is finite, $L_K(E\setminus Y)$ is unital. Also, by the construction $_HE$ in Definition \ref{defn3.4}, closed paths in $_HE$ and $(H,s^{-1}(H),r,s)$ coincide. Then $(H,s^{-1}(H),r,s)$ contains at least one closed path and every closed path in $(H,s^{-1}(H),r,s)$ has exits because $_HE$ satisfies the conditions in Theorem \ref{thm3.1}. Note that since $(_HE)^0$ is finite, each vertex $v\in X\setminus H=(E^0\setminus Y)\setminus H$ emits finitely many edges into $X$ and is the base of no closed paths. This implies that $E\setminus Y$ satisfies Condition (L). It remains to show $\mathcal{H}_{E\setminus Y}=\{\emptyset, X\}$. Suppose that $Z$ is a nonempty saturated hereditary set in $E\setminus Y=(X,r^{-1}(X),r,s)$. The simplicity of $I(H)$ yields that there is no saturated hereditary sets included in $H$. Since every vertex in $X\setminus H$ connects to $H$, we then have $H\subseteq Z$. To get a contradiction, we assume that $v\in X\setminus Z$. Since any element of $X\setminus H$ is the base of no closed paths, the relation $\geq$ is antisymmetric on $X\setminus H$. Thus the set $v^\geqslant_Z:=\{w\in X\setminus Z:v\geq w\}$ has a minimal element $w$ (because $v^\geqslant_Z$ is finite). Recall that $w$ emits finite edges into $X$. So for each finite many edges $e\in(E\setminus Y)^1$ with $s(e)=w$, we have $r(e)\in Z$ and hence $w\in Z$ by the saturation property of $Z$. This contradicts $v^\geqslant_Z\cap Z=\emptyset$, and therefore $Z=X$ as desired.

(2). Let $I(H)/J$ be unital purely infinite simple. By Proposition \ref{prop3.3}, $J$ is a graded ideal and so there exists an admissible pair $(X,B)$ in $E$ such that $J=I(X,B)$. Hence, part (1) implies that $L_K(E)/I(X,B)$ has a unital purely infinite simple quotient and so does $L_K(E)$.
\end{proof}


\section{The stable rank of Leavitt path algebras}

In this section, we compute the stable rank of Leavitt path algebras of non-row-finite graphs. As for the row-finite case in \cite[Theorem 2.8]{ara1}, we show that the possible values for the stable rank of Leavitt path algebras are 1, 2, and $\infty$.

Let $S$ be a unital ring and let $R$ be an associative ring contained in $S$ as a two-sided ideal. Recall that a vector $(a_i)_{i=1}^n$ in $S$ is called \emph{$R$-unimodular} if $a_1-1,a_i\in R$ for $i>1$ and there exist $b_1-1,b_i\in R~(i>1)$ such that $\sum_{i=1}^na_ib_i=1$. We denote the \emph{stable rank} of $R$ by $\mathrm{sr}(R)$, which is the least number $m$ for which for any $R$-unimodular vector $(a_i)_{i=1}^{m+1}$ there exist $r_i\in R$ such that the vector $(a_i+r_ia_{m+1})_{i=1}^m$ is $R$-unimodular. If such number $m$ does not exist, the stable rank of $R$ is defined infinite. Also, if $R$ is a unital ring, the \emph{elementry rank of $R$}, denoted by $\mathrm{er}(R)$, is the least natural number $n$ such that for every $t\geq n+1$ the elementary group $E_t(R)$ acts transitively on the set $U_c(t,R)$ of $t$-unimodular columns with coefficients in $R$ (cf. \cite[11.3.9]{mcc}).

The following lemma determines all Leavitt path algebras with stable rank one.

\begin{lemma}\label{lem4.1}
Let $E$ be a graph. Then $\mathrm{sr}(L_K(E))=1$ if and only if $E$ is acyclic.
\end{lemma}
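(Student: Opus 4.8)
The plan is to prove both directions by passing to finite subgraphs and quoting the known row-finite computation, together with the direct limit description of Corollary \ref{cor2.9}. Recall that stable rank is well behaved under direct limits: if $R=\underrightarrow{\lim}\,R_i$ with each $R_i$ having local units and the connecting maps preserving them, then $\mathrm{sr}(R)\leq\liminf_i\mathrm{sr}(R_i)$, and in particular $\mathrm{sr}(R_i)=1$ for all $i$ forces $\mathrm{sr}(R)=1$. For the reverse inequality in the relevant case, one uses that a unimodular vector in $R$ already lives in some $R_i$ (using local units to absorb the $1$'s), and a reduction witnessing it can again be taken in a larger $R_j$. So $\mathrm{sr}(R)=1$ as soon as each $R_i$ has stable rank one, and conversely one needs that if some $R_i$ has stable rank $>1$ then so does $R$; this last point is where one has to be a little careful, and I expect it to be the main obstacle.

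For the ``if'' direction, suppose $E$ is acyclic. By Corollary \ref{cor2.9} we have $L_K(E)=\underrightarrow{\lim}\,L_K(E_i)$ with each $E_i$ a finite graph, and by the remark after Corollary \ref{cor2.9} (``$E_G$ preserves isolated closed paths'', so acyclic $E$ gives acyclic $E_G$) each $E_i$ is a finite acyclic graph. A finite acyclic graph has Leavitt path algebra isomorphic to a finite direct sum of matrix algebras $M_{n}(K)$ over the field $K$; each such summand has stable rank $1$, hence $\mathrm{sr}(L_K(E_i))=1$. By the direct-limit bound above, $\mathrm{sr}(L_K(E))=1$. (Alternatively, and perhaps cleaner: quote \cite[Theorem 2.8]{ara1} or \cite{ara3} for the row-finite statement that $\mathrm{sr}=1$ iff acyclic, applied to each finite $E_i$, then take the limit.)

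For the ``only if'' direction, suppose $E$ is \emph{not} acyclic, so $E$ contains a closed path $\alpha$, necessarily a simple closed path after reduction; let $v=s(\alpha)$. We must show $\mathrm{sr}(L_K(E))\geq 2$. The key is to produce a ``bad'' unimodular vector locally: in the subalgebra $vL_K(E)v$ there is a copy of either $K[x,x^{-1}]$ (if $\alpha$ has no exit at $v$ and is the only return path there) or of the Leavitt algebra $L(1,n)$ / Toeplitz-type algebra (if there is an exit), and in every case one knows $\mathrm{sr}\geq 2$: $K[x,x^{-1}]$ has stable rank $2$, and purely infinite (simple) rings have stable rank... this is the subtlety, so instead I would argue uniformly as follows. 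Take a finite set $G$ containing $\alpha^0\cup\alpha^1$; then $E_G$ is a finite graph containing a closed path, so by the row-finite result $\mathrm{sr}(L_K(E_G))\geq 2$, and $L_K(E_G)$ sits inside $L_K(E)$ as a corner-type subalgebra (Lemma \ref{lem2.8}) with $v\in L_K(E_G)$. Using that $vL_K(E_G)v=vL_K(E)v$ (the extra edges of $E$ out of $v$ are either absent, since $G$ can be chosen to contain all of $s^{-1}(v)$ when $v$ is a finite emitter, or irrelevant after passing to a suitable corner), one gets $\mathrm{sr}(vL_K(E)v)\geq 2$, and since stable rank of a ring with local units is $\geq$ that of any of its corners, $\mathrm{sr}(L_K(E))\geq 2$. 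Combined with the general fact (to be established in the subsequent lemmas of this section) that the stable rank of any Leavitt path algebra is $1$, $2$, or $\infty$, we conclude $\mathrm{sr}(L_K(E))\neq 1$.

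The main obstacle is the corner-to-algebra comparison of stable rank in the non-unital setting: stable rank is not Morita invariant (as emphasized in the introduction), so one cannot invoke the desingularization. One must instead use the more delicate estimate that for a ring $R$ with local units and a full idempotent-cut corner $eRe$, $\mathrm{sr}(R)\geq\mathrm{sr}(eRe)$ — which does hold — together with a careful choice of $G$ ensuring $vL_K(E_G)v$ actually equals $vL_K(E)v$, or at least contains a subalgebra (like $K[x,x^{-1}]$) already known to have stable rank $\geq 2$. I would spend the bulk of the write-up pinning down this reduction precisely.
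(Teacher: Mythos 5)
Your ``if'' direction is exactly the paper's argument (write $L_K(E)$ as a direct limit of Leavitt path algebras of finite acyclic graphs, each of stable rank one, and use $\mathrm{sr}(L_K(E))\leq\liminf\mathrm{sr}(L_K(E_i))$), and it is fine. The ``only if'' direction, however, has a genuine gap, and it sits exactly where you flagged the difficulty. Your plan is: choose a finite $G$ containing a cycle $\alpha$, note $\mathrm{sr}(L_K(E_G))\geq 2$, and transfer this to $L_K(E)$ through the corner $vL_K(E)v$. Two steps fail. First, $vL_K(E_G)v$ is in general a \emph{proper} subalgebra of $vL_K(E)v$: the latter contains $\beta\gamma^*$ for arbitrarily long paths $\beta,\gamma$ leaving $v$ and travelling through vertices and edges outside $G$, and no finite enlargement of $G$ fixes this (the issue is not only infinite emitters). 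Second, and more fundamentally, the inequality you invoke, $\mathrm{sr}(R)\geq\mathrm{sr}(eRe)$ for a full corner, is false in general: for $R=M_n(S)$ and $e=e_{11}$ one has $\mathrm{sr}(eRe)=\mathrm{sr}(S)$ while $\mathrm{sr}(M_n(S))=\lceil(\mathrm{sr}(S)-1)/n\rceil+1$, which is strictly smaller as soon as $\mathrm{sr}(S)\geq 3$. The weaker implication you actually need (stable rank one passes to corners) is true for unital rings via the substitution property of $eR$, but you neither isolate nor prove it, and you would still have to handle the local-unit setting. In any case, the mere existence of a subalgebra of stable rank $\geq 2$ proves nothing about the ambient algebra (compare $K[x]\subseteq K(x)$), and your closing appeal to the $\{1,2,\infty\}$ trichotomy is circular, since that trichotomy is the main theorem whose proof rests on this lemma.

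The paper avoids subalgebras and corners entirely here and routes everything through ideals and quotients, where Vaserstein's inequalities ($\mathrm{sr}(I)\leq\mathrm{sr}(R)$ and $\mathrm{sr}(R/I)\leq\mathrm{sr}(R)$) are available. Concretely: if $E$ satisfies Condition (K), the set $H_0$ of vertices admitting two distinct return paths is nonempty, each such vertex is a properly infinite idempotent, and a ring of stable rank one cannot contain one (stable rank one gives cancellation); since $H_0$ generates $I(H)$, this forces $\mathrm{sr}(I(H))>1$ and hence $\mathrm{sr}(L_K(E))>1$. Otherwise some vertex lies on exactly one closed path $\alpha$; the paper kills the exits of $\alpha$ by passing to a quotient graph $E\setminus X$ and then exhibits inside that quotient an ideal isomorphic to $M_n(K[x,x^{-1}])$, which has stable rank $2$, and pulls the estimate back through the Vaserstein inequalities. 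If you want to salvage your write-up, redirect the finite-subgraph idea into one of these two ideal/quotient channels rather than through corners.
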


\begin{proof}
Suppose that $E$ is an acyclic graph. By Corollary \ref{cor2.9}, there exists a sequence $(E_i)_{i\geq 1}$ of finite acyclic graphs such that $L_K(E)=\underrightarrow{\lim} L_K(E_i)$. Since $\mathrm{sr}(L_K(E_i))=1$ for all $i$ by \cite[Lemma 7.1]{ara3}, we have
$$\mathrm{sr}(L_K(E))\leq \liminf \mathrm{sr}(L_K(E_i))= 1,$$
and so $\mathrm{sr}(L_K(E))=1$.

For the converse, suppose that $E$ is not acyclic. If $E$ satisfies Condition (K) and we set $H_0$ and $H$ as in Lemma \ref{lem3.5}, then $H_0$ is nonempty and any vertex in $H_0$ is properly infinite as an idempotent of $L_K(E)$. Since $H_0$ generates $I(H)$, it follows that $\mathrm{sr}(I(H))> 1$ and so $\mathrm{sr}(L_K(E))>1$ by \cite[Theorem 4]{vas}.

Now assume that there is a vertex $v\in E^0$ which is the base of exactly one closed path; namely $\alpha$. If $X$ is the hereditary and saturated closure of $\{r(e): s(e)\in \alpha^0,r(e)\notin\alpha^0\}$, then $\alpha$ has no exists in $E\setminus X$. Let $Y$ be the saturated hereditary closure of $\alpha^0$ in $E\setminus X$. Note that by the construction $Y$ of $\alpha^0$ (see the paragraph following Definition \ref{defn2.6}) and the fact that $\alpha^0$ is hereditary, we see that $Y$ is row-finite and so does $_Y(E\setminus X)$. If $\tilde{I}(Y)$ is the ideal of $L_K(E\setminus X)$ generated by $Y$, then \cite[Theorem 2.8]{ara1} implies that $\mathrm{sr}(\tilde{I}(Y))=\mathrm{sr}(L_K(_Y(E\setminus X)))>1$ (because it is not acyclic). (In fact, an induction argument using \cite[Proposition 3]{abr3} shows that $L_K(_Y(E\setminus X))\cong M_n(K[x,x^{-1}])$ for some $n\in\N\cup\{\infty\}$ and so $\mathrm{sr}(L_K(_Y(E\setminus X)))=2$.) Therefore, by \cite[Theorem 4]{vas}, we get
$$\mathrm{sr}(L_K(E)) \geq \mathrm{sr}\left(\frac{L_K(E)}{I(X,B_X)}\right)=\mathrm{sr}(L_K(E\setminus X))\geq \mathrm{sr}(\tilde{I}(Y))> 1.$$
This completes the proof.
\end{proof}

\begin{lemma}\label{lem4.2}
Let $E$ be a graph. If there exists $H\in\mathcal{H}_E$ such that the quotient graph $E\setminus H$ is finite and satisfies in the conditions (1), (2), and (3) in Proposition \ref{prop3.3}, then $\mathrm{sr}(L_K(E))=\infty$.
\end{lemma}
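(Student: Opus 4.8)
The plan is to reduce the statement to a known fact about the stable rank of unital purely infinite simple rings, together with the hereditary behaviour of stable rank under passing to ideals and quotients. Concretely, the hypothesis says that $H\in\mathcal{H}_E$ is such that $E\setminus H$ is a finite graph satisfying the three conditions of Proposition~\ref{prop3.3}. By Proposition~\ref{prop3.3} (the converse implication), this means precisely that $L_K(E)/I(H,B_H)\cong L_K(E\setminus H)$ is purely infinite simple, and since $(E\setminus H)^0$ is finite, $L_K(E\setminus H)$ is unital. So $L_K(E)$ has a unital purely infinite simple quotient $Q:=L_K(E\setminus H)$.

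The next step is to invoke the known value of the stable rank of a unital purely infinite simple ring: $\mathrm{sr}(Q)=\infty$. This is the algebraic analogue of the $C^*$-algebra fact that purely infinite simple $C^*$-algebras have stable rank $\infty$; in the Leavitt path algebra setting it is recorded in \cite{ara3} (and used in the same way in \cite{ara1}). I would cite the appropriate statement — e.g. \cite[Theorem 7.4]{ara3} or its analogue — which gives $\mathrm{sr}(R)=\infty$ for any unital purely infinite simple $K$-algebra $R$. (If one prefers a self-contained route, one notes that in a purely infinite simple unital ring one can produce unimodular rows of every length that cannot be shortened, using the infinite idempotent structure; but citing the existing result is cleaner.)

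Finally, I would push the infinite stable rank from the quotient up to $L_K(E)$ itself using the standard inequality $\mathrm{sr}(R)\geq \mathrm{sr}(R/J)$ for a two-sided ideal $J\trianglelefteq R$ — this is \cite[Theorem 4]{vas}, already used earlier in the proof of Lemma~\ref{lem4.1}. Applying it with $R=L_K(E)$ and $J=I(H,B_H)$ gives
\[
\mathrm{sr}(L_K(E))\;\geq\;\mathrm{sr}\!\left(\frac{L_K(E)}{I(H,B_H)}\right)\;=\;\mathrm{sr}\bigl(L_K(E\setminus H)\bigr)\;=\;\infty,
\]
hence $\mathrm{sr}(L_K(E))=\infty$.

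I do not anticipate a genuine obstacle here; the only point requiring care is the bookkeeping in identifying the quotient. One must be sure that the three conditions of Proposition~\ref{prop3.3} on $E\setminus H$ are exactly what is needed for $L_K(E\setminus H)$ to be purely infinite simple (this is the converse direction of Proposition~\ref{prop3.3}, applied with the admissible pair $(H,B_H)$), and that finiteness of $(E\setminus H)^0$ is what gives unitality of the quotient. Once that identification is in place, the rest is two invocations of results already available: the value $\mathrm{sr}=\infty$ for unital purely infinite simple algebras, and the monotonicity $\mathrm{sr}(R)\geq\mathrm{sr}(R/J)$ from \cite{vas}.
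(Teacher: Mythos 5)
Your proposal is correct and follows essentially the same route as the paper: identify $L_K(E)/I(H,B_H)\cong L_K(E\setminus H)$ as a unital purely infinite simple quotient via Proposition \ref{prop3.3}, note that such rings have stable rank $\infty$ (the paper cites \cite{ara2} for this), and pull the conclusion back to $L_K(E)$ via \cite[Theorem 4]{vas}. The only difference is the choice of reference for the infinite stable rank of unital purely infinite simple rings, which is immaterial.
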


\begin{proof}
By Proposition \ref{prop3.3}, the quotient $L_K(E)/I(H,B_H)$ is unital purely infinite simple, and so $\mathrm{sr}(L_K(E)/I(H,B_H))=\infty$ (see \cite{ara2}). Since $\mathrm{sr}(L_K(E))\geq \mathrm{sr}(L_K(E)/I(H,B_H))$ by \cite[Theorem 4]{vas}, we conclude that $\mathrm{sr}(L_K(E))=\infty$.
\end{proof}

Let $E$ be a graph. Recall that two idempotents $p,q\in L_K(E)$ are called \emph{equivalent} if there exist elements $r,s\in L_K(E)$ such that $p=rs$ and $q=sr$. For idempotents $p,q\in L_K(E)$, we write $p\lesssim q$ in the case $p$ is equivalent to an idempotent $q'$ such that $q'=q'q$. Also, as in \cite{ara5}, we denote by $V(L_K(E))$ the monoid of equivalent classes of idempotents in $M_\infty(L_K(E))$. See \cite[$\S$2 and $\S$3]{ara5} for more details.

\begin{lemma}\label{lem4.3} $\mathrm{(see}$ \cite[Lemma 6.6]{ara3} $\mathrm{and}$ \cite[Lemma 3.8]{tom-stab}$)$ Let $E$ be a graph, let $H$ be a saturated hereditary subset of $E^0$, and let $\pi:L_K(E)\rightarrow L_K(E)/I(H,\emptyset)$ be the quotient map. If $e$ is an idempotent in $L_K(E)$ and $W$ is a finite subset of $E^0\setminus H$ with $\pi(e)\lesssim\sum_{w\in W}\pi(w)$, then there exists a finite set $X\subseteq H$ such that $e\lesssim\sum_{w\in W}w+\sum_{x\in X}x$.
\end{lemma}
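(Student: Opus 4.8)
The plan is to lift the comparison $\pi(e) \lesssim \sum_{w \in W} \pi(w)$ step by step. First I would recall that, by the structure theory for graded ideals (\cite[Theorem 6.1]{rui} and the identification $I(H,\emptyset) \cong L_K(_HE)$ recalled before Lemma \ref{lem3.5}), the ideal $I(H,\emptyset)$ has local units given by finite sums of vertices of $_HE$, hence of the form $\sum_{x \in X_0} x$ for a finite subset $X_0 \subseteq H$ together with finitely many vertices of the form $\alpha \in F_E(H)$; since each such $\alpha$ is itself dominated by $r(\alpha) \in H$ in $V(L_K(E))$ (because $\overline\alpha$ is an edge of $_HE$ with $s(\overline\alpha) = \alpha$, $r(\overline\alpha) = r(\alpha)$, whence $\alpha = \overline\alpha\,\overline\alpha^{\,*}$ and $\alpha \lesssim r(\alpha)$), I can absorb them and take a local unit for $I(H,\emptyset)$ of the form $h = \sum_{x \in X_0} x$ with $X_0 \subseteq H$ finite, after enlarging $X_0$.

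Next, set $f := \sum_{w \in W} w$, a well-defined idempotent in $L_K(E)$ since $W$ is finite and the vertices are orthogonal idempotents. Since $\pi(e) \lesssim \pi(f)$, there are elements $r, s \in L_K(E)$ with $\pi(e) = \pi(r)\pi(s)$ and $\pi(s)\pi(r) = \pi(s)\pi(r)\pi(f)$ and $\pi(r)\pi(s)\pi(e) = \pi(r)\pi(s)$; lifting $r, s$ to elements of $L_K(E)$ (replacing $r$ by $ers$ and $s$ by $sf$, say, to make the endpoints land correctly), I get $e - rs \in I(H,\emptyset)$ and $sr - srf \in I(H,\emptyset)$. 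Now choose a finite set $X \subseteq H$, with $X \supseteq X_0$, large enough that the local unit $h := \sum_{x \in X} x$ of $I(H,\emptyset)$ fixes the finitely many elements $e - rs$, $sr$, $srf$, and $e$ itself on both sides (possible since $I(H,\emptyset)$ has local units and these are finitely many elements of that ideal — except $e$, which need not lie in the ideal, but $eh = he$ is still an idempotent we can work with). The point of introducing $h$ is to "correct" the Murray–von Neumann equivalence: with $g := f + h = \sum_{w \in W} w + \sum_{x \in X} x$, a genuine idempotent in $L_K(E)$, I claim $e \lesssim g$. Indeed, from $e - rs \in I(H,\emptyset)$ and the fact that $h$ is a local unit for that ideal dominating $e - rs$, one checks that $e$ and $rs$ become equivalent after adding on the corner cut out by $h$; more precisely, the standard trick is to write $e$ as a sum of an idempotent equivalent to a subidempotent of $f$ plus an idempotent in $I(H,\emptyset) \cap eL_K(E)e$, the latter being dominated by $h$ since $h$ is a local unit for the ideal. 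Carrying this through yields $e \lesssim f + h = \sum_{w\in W} w + \sum_{x \in X} x$.

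The main obstacle I anticipate is the bookkeeping in the lifting step: a priori $e$ itself is not in $I(H,\emptyset)$, so I cannot simply say "$h$ absorbs $e$"; I must instead decompose $e$ relative to the ideal, and the cleanest route is probably to invoke that $L_K(E)/I(H,\emptyset)$ being a Leavitt path algebra of a graph all of whose vertices I can control, together with the fact (used repeatedly in \cite{ara3, tom-stab}) that lifting of idempotents and of equivalences along the quotient by a graded ideal with local units works because such ideals are idempotent and the relevant $K_0$-type obstructions vanish for these particular ideals. So the real content is: (i) $I(H,\emptyset)$ has local units supported on finitely many vertices of $H$, which I get from the $L_K(_HE)$ picture plus the domination $\alpha \lesssim r(\alpha)$; and (ii) an equivalence modulo an idempotent ideal with local units can be lifted to a subordination $e \lesssim f + h$ after adding a suitable local unit $h$ of the ideal — this is exactly the mechanism of \cite[Lemma 6.6]{ara3} and \cite[Lemma 3.8]{tom-stab}, which I would cite for the technical core and then only need to supply the finiteness of the supporting set $X \subseteq H$.
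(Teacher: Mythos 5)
There is a genuine gap at the heart of your argument, namely the step where you ``lift'' the subordination through the ideal. You write that from $e-rs\in I(H,\emptyset)$ ``one checks that $e$ and $rs$ become equivalent after adding on the corner cut out by $h$,'' and that ``the standard trick'' decomposes $e$ as an idempotent equivalent to a subidempotent of $f$ plus an idempotent in $I(H,\emptyset)\cap eL_K(E)e$. Neither of these is available for a general ring with an ideal having local units: the first is a nonstable $K$-theory exactness statement, and the second is an exchange-ring-type decomposition, which $L_K(E)$ does not satisfy unless $E$ has Condition (K). The fact that actually drives the lemma is the monoid isomorphism $V(L_K(E))/V(I(H,\emptyset))\cong V(L_K(E)/I(H,\emptyset))$; for row-finite graphs this is \cite[Theorem 2.5 and Lemma 5.6]{ara5}, and the whole point of the present lemma is to extend it to arbitrary graphs. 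The paper does this by desingularizing $E$ to a row-finite $F$, identifying $I(H,\emptyset)$ with a corner $p\tilde I(H)p$, and transporting the row-finite isomorphism through the corner; once the isomorphism is in hand, $[\pi(e)]\le[\sum_{w\in W}\pi(w)]$ lifts to $[e]\le[\sum_{w\in W}w]+[g]$ with $[g]\in V(I(H,\emptyset))$, and one finishes as in \cite[Lemma 6.6]{ara3}. Citing \cite[Lemma 6.6]{ara3} for ``the technical core'' does not close this gap, since that lemma and its proof are confined to the row-finite setting.

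A secondary error: you claim that, after absorbing the vertices $\alpha\in F_E(H)$ into their ranges, $I(H,\emptyset)$ has local units of the form $\sum_{x\in X_0}x$ with $X_0\subseteq H$ finite. This is false. Under the isomorphism $I(H,\emptyset)\cong L_K({}_HE)$ the vertex $\alpha\in F_E(H)$ corresponds to the idempotent $\alpha\alpha^*\in L_K(E)$, and while $[\alpha\alpha^*]=[r(\alpha)]$ in $V(L_K(E))$ (your computation $\alpha=\overline\alpha\,\overline\alpha^{\,*}$, $\overline\alpha^{\,*}\overline\alpha=r(\alpha)$ is correct), $\alpha\alpha^*$ is \emph{not} a subidempotent of any sum $\sum_{x\in H}x$, so no such sum acts as a unit on elements of $I(H,\emptyset)$ supported at these vertices. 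The domination $\alpha\lesssim r(\alpha)$ is used at the other end of the argument --- to replace an arbitrary class in $V(I(H,\emptyset))$ by a finite sum $\sum_{x\in X}[x]$ with $X\subseteq H$ --- not to alter the local units of the ideal.
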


\begin{proof}
We claim that $V(L_K(E))/V(I(H,\emptyset))\cong V(L_K(E)/I(H,\emptyset))$. Note that when $E$ is row-finite this isomorphism holds by \cite[Theorem 2.5 and Lemma 5.6]{ara5}. Now suppose that $F$ is the desingularization of $E$ and $p:=\sum_{v\in E^0}v\in \mathcal{M}(L_K(F))$. If we denote $\tilde{I}(H)$ the ideal of $L_K(F)$ generated by $\{v:v\in H\}$, then we have

\begin{align*}
V\left(\frac{L_K(E)}{I(H,\emptyset)}\right)&\cong V\left(p\frac{L_K(F)}{\tilde{I}(H)}p\right)\\
&=\{[p(v+\tilde{I}(H))p]:v\in F^0\}\\
&\cong\{[v]+V(I(H,\emptyset)):v\in E^0\}\\
&= \frac{V(L_K(E))}{V(I(H,\emptyset))}
\end{align*}
because $I(H,\emptyset)\cong p\tilde{I}(H)p$. Thus the claim is verified. Now the rest of the proof is similar to the proof of \cite[Lemma 6.6]{ara3}.
\end{proof}

\begin{defn}
A \emph{graph trace} on $E$ is a function $g:E^0\rightarrow \R^+$ with the following two properties:
\begin{enumerate}[\hspace{5mm}$(1)$]
\item $g(v)\geq \sum_{i=1}^n g(r(e_i))$ for any vertex $v\in E^0$ and any finite set $\{e_1,\ldots,e_n\}\subseteq s^{-1}(v)$.
\item $g(v)=\sum_{s(e)=v}g(r(e))$ for any vertex $v\in E^0$ with $0<|s^{-1}(v)|<\infty$.
\end{enumerate}
We define the \emph{norm} of $g$ to be the (possibly infinite) value $\|g\|:=\sum_{v\in E^0}g(v)$. We say that $g$ is \emph{bounded} if $\|g\|<\infty$.
\end{defn}

Recall that a vertex $v$ is called \emph{left infinite} if the set $\{w\in E^0:w\geq v\}$ contains infinitely many elements. Note that if $g$ is a graph trace on $E$ and $v$ is the base of at least two distinct closed paths, then we have $g(v)=0$. Also, if $g$ is bounded and $v$ is left infinite, then $g(v)=0$.

\begin{lemma}\label{lem4.4} $\mathrm{(see}$ \cite[Corollary 6.8]{ara3}$)$
Let $E$ be a graph. If every vertex of $E$ on a simple closed path is left infinite and $E$ has no bounded graph traces, then $\mathrm{sr}(L_K(E))\leq 2$.
\end{lemma}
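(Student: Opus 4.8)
The plan is to reduce the arbitrary-graph statement to the row-finite case handled in \cite[Corollary 6.8]{ara3}, but the reduction cannot be done by desingularization (stable rank is not Morita invariant), so instead I would approximate $L_K(E)$ from inside by the finite subalgebras $L_K(E_i)$ produced in Corollary \ref{cor2.9} and pass to the limit using the inequality $\mathrm{sr}(\underrightarrow{\lim}L_K(E_i))\leq\liminf\mathrm{sr}(L_K(E_i))$. The difficulty is that the two hypotheses — every vertex on a simple closed path is left infinite, and $E$ has no bounded graph traces — are genuinely infinitary conditions that need not be inherited by any particular $E_i$. So the heart of the argument is a direct one, modeled on the proof of \cite[Corollary 6.8]{ara3}: I would take an $L_K(E)$-unimodular triple $(a_1,a_2,a_3)$ and show it can be reduced to a unimodular pair. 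Since everything takes place in a corner $t_nL_K(E)t_n$ for some local unit $t_n=\sum_{i=1}^nv_i$, and since $L_K(E)=\bigcup_iL_K(E_i)$, there is a finite graph $E_i$ (equivalently a finite set $G$ of vertices and edges) with the relevant vertices and the $a_j$ all lying in $L_K(E_i)=L_K(E_G)$; the reduction will be performed inside $L_K(E_G)$ after enlarging $G$ appropriately.

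The key steps, in order, are as follows. First, recall from \cite[Theorem 2.8]{ara1} (the row-finite dichotomy) and from Lemma \ref{lem4.3} that the only obstruction to $\mathrm{sr}(L_K(E))\le 2$ is, roughly, a unital purely infinite simple quotient of a corner, or an ``isolated'' closed path giving a $K[x,x^{-1}]$-summand; under our hypotheses neither can occur. Concretely, second, I would show that no corner $vL_K(E)v$ has a unital purely infinite simple quotient: by Lemma \ref{lem3.5} such a quotient would force the saturated hereditary set $H$ (generated by the $H_0$ of that lemma) to give a unital purely infinite simple $L_K(_HE)$ with $(_HE)^0$ finite, and then one constructs a bounded graph trace on $_HE$ — hence on $E$ — by assigning mass to a vertex on a simple closed path of $_HE$; but every such vertex is left infinite in $E$, so any bounded graph trace must vanish there, and one shows this forces the trace, and ultimately the whole structure, to be trivial, contradicting either the purely-infinite-simplicity or the hypotheses. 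Third, for the case of a vertex that is the base of exactly one closed path $\alpha$ with no exit in a suitable quotient $E\setminus X$, the argument of the converse direction of Lemma \ref{lem4.1} produces an ideal isomorphic to $M_n(K[x,x^{-1}])$; but then $g(v)=1$ on $\alpha^0$ and $0$ elsewhere (after taking hereditary-saturated closures) would be a bounded graph trace on $E$ unless $\alpha^0$ fails to be left-finite in the ambient graph — and by hypothesis each vertex of $\alpha^0$ is left infinite, again a contradiction. Hence $E$ satisfies Condition (K) and none of the bad configurations arises, so one is exactly in the situation where the row-finite proof of \cite[Corollary 6.8]{ara3} applies verbatim to each $L_K(E_G)$, yielding $\mathrm{sr}(L_K(E_G))\le 2$ uniformly and therefore $\mathrm{sr}(L_K(E))\le 2$.

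The main obstacle I anticipate is bookkeeping around graph traces under the passage between $E$, its quotients $E\setminus X$, and the subgraphs $_YE$: one must check that a bounded graph trace manufactured on a quotient or on an ideal-subgraph pulls back (or pushes forward) to a bounded graph trace on $E$ itself, respecting both defining inequalities at infinite emitters, and that ``left infinite in $E$'' is the correct hypothesis to kill it. A secondary subtlety is that the $a_j$ in a unimodular triple, together with the witnesses $b_j$, involve only finitely many edges, so one can pick $G$ with $r(G^1)\subseteq G^0$ large enough that the entire reduction computation of \cite[Corollary 6.8]{ara3} can be carried out inside $L_K(E_G)$ — this requires noting, via Lemma \ref{lem2.8}, that idempotents and the elements $ee^*$ used in that reduction already live in $L_K(E_G)$ once $G$ contains the relevant vertices and edges. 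Once these two points are pinned down, the inequality $\mathrm{sr}(L_K(E))\le\liminf_i\mathrm{sr}(L_K(E_i))\le 2$ closes the proof.
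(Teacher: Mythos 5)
There is a genuine gap, and it sits at the load-bearing final step: the claim that the hypotheses force $\mathrm{sr}(L_K(E_G))\leq 2$ ``uniformly'' for the finite approximating subalgebras, so that $\mathrm{sr}(L_K(E))\leq\liminf_i\mathrm{sr}(L_K(E_i))\leq 2$. This is false. The hypotheses of the lemma (every vertex on a simple closed path is left infinite, no bounded graph traces) are global conditions on $E$ that are destroyed by passing to the finite graphs $E_G$, and the subalgebras $L_K(E_G)$ are not ideals, so the absence of unital purely infinite simple quotients of $L_K(E)$ says nothing about quotients of $L_K(E_G)$. Concretely, take $E^0=\{v_i\}_{i\geq 1}$ with two loops at each $v_i$ and one edge from $v_{i+1}$ to $v_i$: every vertex is left infinite, the only graph trace is zero, and $\mathrm{sr}(L_K(E))=2$; yet for $G=\{v_1\}\cup\{\text{the two loops at }v_1\}$ (and for every larger $G$) the finite graph $E_G$ has a unital purely infinite simple quotient, so $\mathrm{sr}(L_K(E_G))=\infty$ and the direct-limit bound yields only $\mathrm{sr}(L_K(E))\leq\infty$. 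You correctly flag in your opening paragraph that the hypotheses ``need not be inherited by any particular $E_i$,'' but the proposal never resolves that tension; it simply reasserts the inheritance at the end. (This is also why, in the main theorem, the authors apply the direct-limit argument only to the quotient $E/(H,\emptyset)$ with isolated closed paths, where $E_G$ provably has stable rank at most $2$, and \emph{not} to the ideal $I(H,\emptyset)$ to which Lemma \ref{lem4.4} is applied.)

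The paper's actual mechanism is entirely different and does not pass through finite subalgebras or unimodular-row reductions at all: writing $p_n=\sum_{i=1}^n v_i$ for the standard local units, one uses the hypotheses --- via the argument of \cite[Proposition 6.7]{ara3}, with Lemma \ref{lem4.3} replacing \cite[Lemma 6.6]{ara3} to handle non-row-finite graphs --- to produce a finite set $W\subseteq E^0$ disjoint from $\{v_1,\ldots,v_n\}$ with $p_n\lesssim\sum_{w\in W}w\lesssim p_m-p_n$ for some $m>n$; then \cite[Lemma 6.1]{ara3} converts the comparison $p_n\lesssim p_m-p_n$ for all $n$ directly into $\mathrm{sr}(L_K(E))\leq 2$. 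If you want to repair your proof, the idempotent-comparison route is the one to take; your discussion of graph traces and left-infinite vertices is pointed in the right direction (those hypotheses are exactly what drive the subequivalence $p_n\lesssim\sum_{w\in W}w$), but the reduction to finite graphs must be abandoned.
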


\begin{proof}
Write $E^0=\{v_i:i\geq 1\}$ and for each $n\geq 1$ set $p_n:=\sum_{i=1}^n v_i$. Then $\{p_n\}_{n\geq 1}$ is an increasing set of local units for $L_K(E)$. Take an arbitrary $n\in \N$ and set $V:=\{v_1,\ldots,v_n\}$. By following the proof of \cite[Proposition 6.7]{ara3} and using Lemma \ref{lem4.3} instead of \cite[Lemma 6.6]{ara3}, there exists a finite subset $W\subseteq E^0$ such that $V\cap W=\emptyset$ and $p_n=\sum_{i=1}^n v_i\lesssim\sum_{w\in W}w$. If $m:=\max\{i\in \N:v_i\in W\}$, then $m>n$ and we have $\sum_{w\in W}w\lesssim\sum_{i=n+1}^m v_i=p_m-p_n$ and so, $p_n\lesssim p_m-p_n$. Since $n\in \N$ was arbitrary, \cite[Lemma 6.1]{ara3} implies that $\mathrm{sr}(L_K(E))\leq 2$.
\end{proof}

\begin{lemma}\label{lem4.5}$\mathrm{(see}$ \cite[Lemma 3.2]{dei} $\mathrm{and}$ \cite[Lemma 7.4]{ara3}$)$
Let $E$ be a graph. If $L_K(E)$ has no unital purely infinite simple quotients, then there exists $H\in \mathcal{H}_E$ such that $E/(H,\emptyset)$ is a graph with isolated closed path (or no closed path at all) and $\mathrm{sr}(I(H,\emptyset))\leq 2$.
\end{lemma}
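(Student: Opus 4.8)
The plan is to follow the strategy of \cite[Lemma 3.2]{dei} (which treats graph $C^*$-algebras) and \cite[Lemma 7.4]{ara3} (the row-finite Leavitt path algebra case), adapting it to arbitrary graphs. The hereditary saturated set $H$ should be the one built from the "bad" vertices, i.e. the analogue of the set appearing in Lemma \ref{lem3.5}: let
$$H_0:=\left\{v\in E^0:\exists e\neq f\in E^1~\mathrm{with}~ s(e)=s(f)=v~\mathrm{and}~r(e),r(f)\geq v\right\},$$
and let $H$ be the saturated hereditary closure of $H_0$. First I would check that $E/(H,\emptyset)$ has only isolated closed paths: a closed path $\alpha$ in $E\setminus H$ with an exit would, together with the condition that its exit connects back (using the structure of $E\setminus H$ and saturation), force the base vertex into $H_0\subseteq H$, contradicting that $\alpha$ lies in $E\setminus H$; a base vertex lying on two distinct closed paths is in $H_0$ directly. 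So every closed path in $E\setminus H$ is isolated (has no exit and no other closed path through its vertices), which is the first assertion.

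**Next** I would bound $\mathrm{sr}(I(H,\emptyset))$. By the special case of \cite[Theorem 6.1]{rui} quoted in the excerpt, $I(H,\emptyset)\cong L_K({}_HE)$, so it suffices to show $\mathrm{sr}(L_K({}_HE))\leq 2$. Here I would invoke Lemma \ref{lem4.4}: I must verify that every vertex of ${}_HE$ lying on a simple closed path is left infinite in ${}_HE$, and that ${}_HE$ has no bounded graph trace. The closed paths of ${}_HE$ coincide with those of $(H,s^{-1}(H),r,s)$ (as noted in the proof of Lemma \ref{lem3.5}), and each such vertex $v$ lies in $H$; by the definition of $H_0$ and the fact that $H$ is the saturated hereditary closure of $H_0$, every vertex of $H$ is approached from a vertex of $H_0$, and each vertex $w\in H_0$ emits two distinct edges whose ranges both return to $w$ — this yields two distinct closed paths at $w$ and hence, feeding in the new vertices $F_E(H)$ and the doubling of return paths, infinitely many vertices $w'$ with $w'\geq v$. (If $v$ is already the base of two distinct closed paths this is immediate.) For the absence of a bounded graph trace: a bounded graph trace $g$ on ${}_HE$ would vanish on every left-infinite vertex, hence on every vertex lying on a simple closed path, and then properties (1)--(2) of a graph trace propagate this to $g\equiv 0$ on all of $({}_HE)^0$ (using that every vertex of ${}_HE$ connects into $H$, and within $H$ every vertex connects to a vertex carrying two closed paths); so $\|g\|=0$, i.e. there is no nonzero bounded graph trace. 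Thus Lemma \ref{lem4.4} gives $\mathrm{sr}(I(H,\emptyset))=\mathrm{sr}(L_K({}_HE))\leq 2$.

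**The main obstacle** I anticipate is the hypothesis-handling step: I have not yet used "$L_K(E)$ has no unital purely infinite simple quotients". Where this enters is in ruling out the alternative that forces $\mathrm{sr}=\infty$: if $I(H,\emptyset)$ itself had a unital purely infinite simple quotient, then by Lemma \ref{lem3.5}(2) so would $L_K(E)$, contradicting the hypothesis; more carefully, if some corner of $L_K({}_HE)$ were unital purely infinite simple I could run the argument of Lemma \ref{lem3.5}. The delicate part is making sure the dichotomy is exhaustive — that ${}_HE$ (equivalently the subgraph on $H$) either supports a unital purely infinite simple quotient (excluded by hypothesis via Lemma \ref{lem3.5}) or else meets the hypotheses of Lemma \ref{lem4.4}. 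This is exactly the structural trichotomy behind the whole section: a graph with a closed path either has a vertex on a closed path that fails to be left infinite, or has a bounded graph trace, or falls into the case handled by Lemma \ref{lem4.4}; and the first two cases are precisely what produce unital purely infinite simple quotients (via passing to an appropriate finite quotient graph satisfying the conditions of Proposition \ref{prop3.3}, as in Lemma \ref{lem4.2}). So the proof reduces to: assume no unital purely infinite simple quotient; then the bad cases for ${}_HE$ are excluded, Lemma \ref{lem4.4} applies to ${}_HE$, and $\mathrm{sr}(I(H,\emptyset))\leq 2$, while $E/(H,\emptyset)$ has only isolated closed paths by the choice of $H$. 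I would write the argument in that order, with the verification of "no unital purely infinite simple quotient $\Rightarrow$ hypotheses of Lemma \ref{lem4.4} for ${}_HE$" as the technical heart, leaning on Lemma \ref{lem3.5}, Proposition \ref{prop3.3}, and the desingularization/row-finite results to reduce to the already-known row-finite statement \cite[Lemma 7.4]{ara3}.
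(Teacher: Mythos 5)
Your final plan coincides with the paper's proof: the same $H$ built from $H_0$, the identification $I(H,\emptyset)\cong L_K({}_HE)$, Lemma \ref{lem4.4} applied to ${}_HE$, with the hypothesis entering exactly where you place it --- via Lemma \ref{lem3.5} and Proposition \ref{prop3.3} to exclude a simple closed path in ${}_HE$ whose vertices fail to be left infinite --- and with the absence of bounded graph traces obtained from $H_0\subseteq N_g$ and the fact that $N_g$ is hereditary and saturated. Only your first-pass direct argument that vertices on closed paths of ${}_HE$ are automatically left infinite should be discarded (two loops at a single vertex give a counterexample, and there the hypothesis fails), but you correctly identify and repair this in your final paragraph, so the proposal is essentially the paper's argument.
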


\begin{proof}
Let
$$H_0:=\left\{v\in E^0:\exists e\neq f\in E^1~ \mathrm{so~that~}s(e)=s(f)=v~\mathrm{and}~r(e),r(f)\geq v\right\}$$
and let $H$ be the saturated hereditary closure of $H_0$. We show that $H$ is the desired hereditary and saturated set. It is clear from the definition of $H_0$ that $E/(H,\emptyset)$ is a graph with isolated closed paths. Indeed, since additional vertices of $E/(H,\emptyset)$ to $E^0\setminus H$ are sinks, if $v$ is a vertex in $E/(H,\emptyset)$ based at least two closed paths then it belongs to $H_0$, contrary to $(E^0\setminus H)\cap H^0=\emptyset$.

We show that $\mathrm{sr}(I(H,\emptyset))\leq 2$. Since $I(H,\emptyset)\cong L_K(_HE)$, by Lemma \ref{lem4.4}, it suffices to show that every vertex in $_HE$ on a simple closed path is left infinite and $_HE$ has no bounded graph traces. These may be proved similar to the proof of \cite[Lemma 3.2]{tom-stab}. If there exists a simple closed path in $_HE$ so that its vertices are left infinite, then the argument in the third paragraph of the proof of \cite[Lemma 3.2]{dei} yields that there exists $X\in \mathcal{H}_{_HE}$ such that the graph $_HE/(X,\emptyset)$ satisfies the conditions of Theorem \ref{thm3.1}. Then by Proposition \ref{prop3.3}, $L_K(_HE)$ has a unital purely infinite simple quotient and so does $L_K(E)$ by Lemma \ref{lem3.5}, contradicting the hypothesis.

Now we show that there is no bounded graph traces on $_HE$. Suppose on the contrary that $g$ is a bounded graph trace on $_HE$. Since every vertex of $H_0$ lies on at least two closed paths, we have $H_0\subseteq N_g:=\{v\in (_HE)^0:g(v)=0\}$ and so, $H\subseteq N_g$ because $N_g$ is hereditary and saturated in $_HE$ (\cite[Lemma 3.7]{tom-stab}). It follows that $g$ is zero on all $_HE^0$ by the construction of $_HE$, a contradiction. Thus, $_HE$ satisfies the conditions of Lemma \ref{lem4.4} and so $\mathrm{sr}(L_K(_HE))\leq 2$.
\end{proof}

Now we prove the main result of this article.

\begin{thm}
Let $E$ be a graph. The values of the stable rank of $L_K(E)$ are:
\begin{enumerate}[\hspace{5mm}$(1)$]
\item $\mathrm{sr}(L_K(E))=1$ if $E$ is acyclic;
\item $\mathrm{sr}(L_K(E)=\infty$ if there exists $H\in\mathcal{H}_E$ such that the graph $E\setminus H$ is finite and satisfies the conditions of Theorem \ref{thm3.1};
\item $\mathrm{sr}(L_K(E))=2$ otherwise.
\end{enumerate}
\end{thm}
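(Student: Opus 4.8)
The plan is to prove the trichotomy by combining the lemmas of this section, treating each of the three cases in turn and then arguing that they are mutually exclusive and exhaustive. Case~(1) is immediate from Lemma~\ref{lem4.1}: $E$ acyclic is equivalent to $\mathrm{sr}(L_K(E))=1$. Case~(2) is immediate from Lemma~\ref{lem4.2}: if some $H\in\mathcal{H}_E$ makes $E\setminus H$ finite and purely infinite simple (in the sense of Theorem~\ref{thm3.1}, equivalently the conditions of Proposition~\ref{prop3.3} with $I=I(H,B_H)$), then $\mathrm{sr}(L_K(E))=\infty$. So the real content is Case~(3): assuming $E$ has a closed path (so we are not in Case~(1)) and that \emph{no} quotient $L_K(E)/I(H,B_H)$ is unital purely infinite simple (so we are not in Case~(2), using Proposition~\ref{prop3.3} to translate ``finite quotient graph satisfying Theorem~\ref{thm3.1}'' into ``unital purely infinite simple quotient''), we must show $\mathrm{sr}(L_K(E))=2$.

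For the lower bound $\mathrm{sr}(L_K(E))\geq 2$ in Case~(3): since $E$ is not acyclic, Lemma~\ref{lem4.1} gives $\mathrm{sr}(L_K(E))\neq 1$, hence $\mathrm{sr}(L_K(E))\geq 2$. (One should double-check that the ``$\infty$'' possibility is genuinely excluded here and does not need separate treatment — it will be, because the upper bound argument caps the value at $2$.) For the upper bound $\mathrm{sr}(L_K(E))\leq 2$: invoke Lemma~\ref{lem4.5}. Since $L_K(E)$ has no unital purely infinite simple quotient, there is $H\in\mathcal{H}_E$ with $E/(H,\emptyset)$ having isolated closed paths (or none) and with $\mathrm{sr}(I(H,\emptyset))\leq 2$. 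Then I would apply the exact sequence behaviour of stable rank: $0\to I(H,\emptyset)\to L_K(E)\to L_K(E)/I(H,\emptyset)\to 0$, where $L_K(E)/I(H,\emptyset)\cong L_K(E/(H,\emptyset))$, together with the estimate $\mathrm{sr}(R)\leq\max\{\mathrm{sr}(I),\mathrm{sr}(R/I),\ldots\}$ from Vaserstein (\cite[Theorem 4]{vas}). The remaining piece is $\mathrm{sr}(L_K(E/(H,\emptyset)))$: since $E/(H,\emptyset)$ is a graph with isolated closed paths, one expects $\mathrm{sr}\leq 2$ there as well — this is where the row-finite result \cite[Theorem 2.8]{ara1} or a direct argument via the structure of such graphs (each closed path contributing a $K[x,x^{-1}]$-like corner, which has stable rank $2$) enters, possibly after passing to the desingularization or using Corollary~\ref{cor2.9} to write it as a limit of finite graphs with isolated loops.

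The main obstacle I anticipate is the bookkeeping around which version of the Vaserstein exact-sequence inequality to use and ensuring all the relevant rings have local units so that the ideal-theoretic stable rank results apply — the paper works throughout with non-unital rings with local units, and \cite[Theorem 4]{vas} as cited handles $\mathrm{sr}(R)$ versus $\mathrm{sr}(I)$ and $\mathrm{sr}(R/I)$, but one must confirm it delivers $\mathrm{sr}(L_K(E))\leq\max(2,2)=2$ rather than something weaker like $\mathrm{sr}(L_K(E))\leq\mathrm{sr}(I)+\mathrm{sr}(R/I)$. A secondary obstacle is verifying that $E/(H,\emptyset)$ with isolated closed paths really has stable rank at most $2$ in the non-row-finite setting — this should follow by the same desingularization-free, direct-limit approach used in Lemma~\ref{lem4.1} for the acyclic case, now allowing isolated loops, or by citing that isolated closed paths force the Leavitt path algebra of each ``closed-path component'' to be Morita-equivalent to $K[x,x^{-1}]$ and invoking \cite[Theorem 2.8]{ara1}.

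Finally I would close the argument by checking mutual exclusivity: (1) and (2) cannot both hold since an acyclic graph has no purely infinite simple quotient (such a quotient needs a closed path by Theorem~\ref{thm3.1}(2)); and ``otherwise'' in (3) is precisely the negation of ``$E$ acyclic'' and of ``$\exists H$ with $E\setminus H$ finite purely infinite simple'', so the three cases partition all graphs and the theorem follows.
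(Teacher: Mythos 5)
Your overall architecture matches the paper's: cases (1) and (2) are Lemmas \ref{lem4.1} and \ref{lem4.2}, and case (3) reduces to showing $\mathrm{sr}(L_K(E))\leq 2$ via Lemma \ref{lem4.5}, with the lower bound from Lemma \ref{lem4.1}. However, the step you flag as ``bookkeeping'' is in fact the crux, and as written your argument has a genuine gap. Vaserstein's inequality for an extension $0\to I\to R\to R/I\to 0$ is $\mathrm{sr}(R)\leq\max\{\mathrm{sr}(I),\,\mathrm{sr}(R/I)+1\}$, \emph{not} $\max\{\mathrm{sr}(I),\mathrm{sr}(R/I)\}$. Since the quotient graph $E/(H,\emptyset)$ generally contains (isolated) closed paths, $\mathrm{sr}(L_K(E/(H,\emptyset)))$ will typically equal $2$, so a direct application of \cite[Theorem 4]{vas} to $0\to I(H,\emptyset)\to L_K(E)\to L_K(E/(H,\emptyset))\to 0$ only yields $\mathrm{sr}(L_K(E))\leq 3$. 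Nothing else in your argument rules out the value $3$.

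The paper closes this gap using the elementary rank. By Corollary \ref{cor2.9} one writes $L_K(E/(H,\emptyset))=\underrightarrow{\lim}L_K(E_i)$ with each $E_i$ a \emph{finite} graph with isolated closed paths; then \cite[Proposition 2.7]{ara1} gives not only $\mathrm{sr}(L_K(E_i))\leq 2$ but also $\mathrm{er}(L_K(E_i))=1$. For the extensions $0\to I(H,\emptyset)\to\pi^{-1}(L_K(E_i))\to L_K(E_i)\to 0$ one then applies \cite[Corollary 2.6]{ara1} (which uses the elementary rank hypothesis to improve Vaserstein's bound from $\mathrm{sr}(R/I)+1$ down to $\mathrm{sr}(R/I)$) when $\mathrm{sr}(L_K(E_i))=2$, and the plain Vaserstein bound when $\mathrm{sr}(L_K(E_i))=1$; in either case $\mathrm{sr}(\pi^{-1}(L_K(E_i)))\leq 2$, and the direct-limit inequality $\mathrm{sr}(L_K(E))\leq\liminf\mathrm{sr}(\pi^{-1}(L_K(E_i)))$ finishes the proof. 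The passage to finite graphs is not optional: the elementary rank result is stated for unital rings, so one cannot apply it directly to the (generally non-unital) quotient $L_K(E/(H,\emptyset))$. Your proposal would be complete once this refinement is inserted in place of the bare appeal to \cite[Theorem 4]{vas}.
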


\begin{proof}
Statements (1) and (2) are Lemmas \ref{lem4.1} and \ref{lem4.2}, respectively. We thus assume that $E$ contains closed paths and does not have any $H\in \mathcal{H}_E$ such that $E\setminus H$ is finite and satisfying the conditions of Theorem \ref{thm3.1}. By Lemma \ref{lem4.5}, there exists $H\in \mathcal{H}_E$ such that $\mathrm{sr}(I(H,\emptyset))\leq 2$ and $E/(H,\emptyset)$ is a graph with isolated closed paths. Corollary \ref{cor2.9} implies that there is an increasing sequence $(E_i)$ of finite graphs with isolated closed paths such that $L_K(E/(H,\emptyset))=\underrightarrow{\lim}L_K(E_i)$. By Lemma \ref{lem2.8}, we may assume that each $L_K(E_i)$ is a $K$-subalgebra of $L_K(E/(H,\emptyset))$. Then \cite[Proposition 2.7]{ara1} yields that $\mathrm{sr}(L_K(E_i))\leq 2$ and $\mathrm{er}(L_K(E_i))=1$. Let $\pi: L_K(E)\rightarrow L_K(E)/I(H,\emptyset)\cong L_K(E/(H,\emptyset))$ be the quotient map. So, for each $i\geq 1$, we have $\pi^{-1}(L_K(E_i))/I(H,\emptyset)\cong L_K(E_i)$. If $\mathrm{sr}(L_K(E_i))=1$, then by \cite[Theorem 4]{vas}, we get
$$\mathrm{sr}(\pi^{-1}(L_K(E_i)))\leq \max\{\mathrm{sr}(I(H,\emptyset)),\mathrm{sr}(L_K(E_i))+1\}=2.$$
If $\mathrm{sr}(L_K(E_i))=2$, \cite[Corollary 2.6]{ara1} implies that $\mathrm{sr}(\pi^{-1}(L_K(E_i)))=2$. Hence $\mathrm{sr}(\pi^{-1}(L_K(E_i)))\leq 2$ for all $i\geq 1$, and since $L_K(E)=\bigcup_i \pi^{-1}(L_K(E_i))$ we conclude that
$$\mathrm{sr}(L_K(E))\leq \liminf \mathrm{sr}(\pi^{-1}(L_K(E_i)))\leq 2.$$
Now Lemma \ref{lem4.1} completes the proof.
\end{proof}


\end{document}